\title{\vspace*{18pt} Distributed Secondary Frequency Control through MTDC Transmission Systems}
\author{ \IEEEauthorblockA{Martin Andreasson$^{\IEEEauthorrefmark{1}}$, Roger Wiget, Dimos V. Dimarogonas, Karl H. Johansson and G\"oran Andersson
}
\\
\thanks{M. Andreasson, D. V. Dimarogonas and K. H. Johansson are with the ACCESS Linnaeus Centre, KTH Royal Institute of Technology, Stockholm, Sweden. R. Wiget and G. Andersson are with the Power Systems Laboratory, ETH Zurich, Zurich, Switzerland. This work was supported in part by the European Commission, the Swedish Research Council (VR) and the Knut and Alice Wallenberg Foundation.
%We would like to thank the anonymous reviewers for their valuable comments.
%The $2^{\text{nd}}$ author is also affiliated with the Centre for Autonomous Systems at KTH.
\IEEEauthorrefmark{1} Corresponding author (mandreas@kth.se).}
} 
\newcommand{\raa}[1]{\renewcommand{\arraystretch}{#1}}
\newtheorem{theorem}{Theorem}
\newtheorem{remark}{Remark}
\newtheorem{assumption}{Assumption}
\newtheorem{objective}{Objective}
\DeclareMathOperator*{\diag}{diag}
\newcommand{\beq}{\begin{equation}}
\newcommand{\eeq}{\end{equation}}
\newcommand{\bq}{\begin{eqnarray}}
\newcommand{\eq}{\end{eqnarray}}
\newcommand{\bqn}{\begin{eqnarray*}}
\newcommand{\eqn}{\end{eqnarray*}}
\newcommand{\bee}{\begin{enumerate}}
\newcommand{\eee}{\end{enumerate}}
\renewcommand{\p@subfigure}{}% Void parent macro for figures
\newlength\fheight
\newlength\fwidth
\begin{document}
\maketitle

\begin{abstract}
In this paper, we present distributed controllers for sharing primary and secondary frequency control reserves for asynchronous AC transmission systems, which are connected through a multi-terminal HVDC grid.
By using Lyapunov arguments, the equilibria of the closed-loop system are shown to be globally asymptotically stable. We quantify the static errors of the voltages and frequencies, and give upper bounds for these errors. It is also shown that the controllers have the property of power sharing, i.e., primary and secondary frequency control reserves are shared fairly amongst the AC systems. The proposed controllers are applied to a high-order dynamic model of of a power system consisting of asynchronous AC grids connected through a six-terminal HVDC grid.
\end{abstract}

%%%%%%%%%%%%%%%%%%%%%%%%%%%%%%%%%%%%%%%%
%%%%%%%%%%%%%%%%%%%%%%%%%%%%%%%%%%%%%%%%
\section{Introduction}

Transmitting power over long distances with minimal losses is one of the greatest challenges in today's power transmission systems. The strong rising share of renewables increased the distances between power generation and consumption. This is a driving factor behind the development of long-distance power transmission technologies. One such example are large-scale off-shore wind farms, which often require power to be transmitted in cables over long distances to the mainland power grid \cite{breseti2007HVDC}. Due to the high resistive losses in AC cables, high-voltage direct current (HVDC) power transmission is a commonly used technology for power transmission in these cases. The higher investment cost of an HVDC transmission system compared to an AC transmission system, is compensated by the lower resistive losses for sufficiently long distances \cite{melhem2013electricity}. The break-even point, i.e., the point where the total construction and operation costs of overhead HVDC and AC lines are equal, is typically 500--800 km \cite{padiyar1990hvdc}. However, for cables, the break-even point is typically less than 50 km \cite{Hertem2010technical}. Increased use of HVDC technologies for electrical power transmission suggests that future HVDC transmission systems are likely to consist of multiple terminals connected by several HVDC transmission lines \cite{Haileselassie2013Power}. Such systems are referred to as Multi-terminal HVDC (MTDC) systems in the literature.
%The main technical obstacle to overcome in order to realize an MTDC grid is the development of new DC breakers \cite{Franck2011HVDC}. There are a few advanced ideas to realize this device in the near future \cite{callavik2012hybrid}.

Maintaining an adequate DC voltage is the single most important practical control problem for HVDC transmission systems. If the DC voltage deviates too far from the nominal operational voltage, equipment could be damaged, resulting in loss of power transmission capability and high costs.

Many existing AC grids are connected through HVDC links, which are typically used for bulk power transfer between AC areas. The fast operation of the DC converters, however, also enables frequency regulation of one of the connected AC grids through the HVDC link. One practical example of this is the island of Gotland in Sweden, which is only connected to the Nordic grid through an HVDC cable \cite{axelsson2001gotland}. However, since the Nordic grid has orders of magnitudes higher inertia than the AC grid of Gotland, the influence of the frequency regulation on the Nordic grid is negligible.
%Wind farms connected through HVDC links have also been proposed to provide load frequency control \cite{fan2009wind}. When connecting two or more AC grids of the same or different order of magnitude, the influence on the other AC grids must be taken into account when using HVDC links to share primary frequency reserves.
By connecting several AC grids by an MTDC system, primary frequency regulation reserves may be shared, which reduces the need for frequency regulation reserves in the individual AC systems \cite{li2008frequency}. In \cite{dai2010impact}, distributed control algorithms have been applied to share primary frequency control reserves of asynchronous AC transmission systems connected through an MTDC system. However, the proposed controller requires a slack bus to control the DC voltage, defeating the purpose of distributing the primary frequency regulation reserves. In \cite{andreasson2014MTDCjournal}, distributed controllers for secondary voltage control of MTDC systems are proposed, which do not rely on a slack bus. The analysis in the aforementioned reference is however restricted to the dynamics of the MTDC system, thus neglecting any dynamics of connected AC systems.

In \cite{dai2011voltage} and \cite{silva2012provision}, decentralized controllers are employed to share primary frequency control reserves. In \cite{silva2012provision} no stability analysis of the closed-loop system is performed, whereas \cite{dai2011voltage} guarantees stability provided that the connected AC areas have identical parameters and the voltage dynamics of the HVDC system are neglected. In \cite{taylordecentralized}, optimal decentralized controllers for AC systems connected by HVDC systems are derived. In contrast to the aforementioned references, \cite{andreasson2014distributed} also considers the dynamics of connected AC systems as well as the dynamics of the MTDC system.
A distributed controller, relying on a communication network, is proposed in \cite{dai2013voltage}. Stability is guaranteed in the absence of communication delays. The voltage dynamics of the MTDC system are however neglected. Moreover the implementation of the controller requires every controller to access measurements of the DC voltages of all terminals.

In this paper we present two distributed controllers for combined primary and secondary frequency control of asynchronous AC systems connected through an MTDC system. The first controller requires that the communication network constitutes a complete graph, while the second controller only relies on local information from neighboring AC systems. In contrast to existing controllers in the literature, the proposed controllers in this paper consider the voltage dynamics of the MTDC system, in addition to the frequency dynamics of the AC systems. The equilibrium of the closed-loop system is shown to be globally asymptotically stable for any set of controller parameters. We furthermore bound the asymptotic errors of the voltages and frequencies at the equilibrium, and show the achievable performance is better than for the corresponding decentralized controller studied in \cite{andreasson2014distributed}. We also show that the frequency control reserves are asymptotically shared approximately equally, which is referred to as power sharing in the literature.

 The remainder of this paper is organized as follows. In Section \ref{sec:prel}, the mathematical notation is defined. In Section \ref{sec:model}, the system model and the control objectives are defined. In Section \ref{sec:dec_control}, we recall the decentralized proportional controller for distributing primary frequency control proposed in \cite{andreasson2014distributed}. In Section \ref{sec:secondary_frequency_control}, two secondary frequency controllers for sharing primary and secondary frequency control reserves are presented.
 In Section \ref{sec:simulations}, simulations of the distributed controller on a six-terminal MTDC test system are provided, showing the effectiveness of the proposed controller. The paper ends with concluding remarks in Section \ref{sec:discussion}.

\section{Preliminaries}
\label{sec:prel}
Let $\mathcal{G}$ be a static, undirected graph. Denote by $\mathcal{V}=\{ 1,\hdots, n \}$ the vertex set of $\mathcal{G}$, and by $\mathcal{E}=\{ 1,\hdots, m \}$ the edge set of $\mathcal{G}$. Let $\mathcal{N}_i$ be the set of neighboring vertices to $i \in \mathcal{V}$.
Denote by $\mathcal{B}$ the vertex-edge incidence matrix of $\mathcal{G}$, and let $\mathcal{\mathcal{L}_W}=\mathcal{B}W\mathcal{B}^T$ be the weighted Laplacian matrix of $\mathcal{G}$, with edge-weights given by the  elements of the diagonal matrix $W$. We denote the space of real-valued $n\times m$-valued matrices by $\mathbb{R}^{n\times m}$.
Let $\mathbb{C}^-$ denote the open left half complex plane, and $\bar{\mathbb{C}}^-$ its closure. We denote by $c_{n\times m}$ a vector or matrix of dimension $n\times m$, whose elements are all equal to $c$. For a symmetric matrix $A$, $A>0 \;(A\ge 0)$ is used to denote that $A$ is positive (semi) definite. $I_{n}$ denotes the identity matrix of dimension $n$. For simplicity, we will often drop the notion of time dependence of variables, i.e., $x(t)$ will be denoted $x$. Let $\norm{\cdot}_\infty$ denote the maximal absolute value of the elements of a vector.
%\begin{definition}
%Consider the dynamical system
%\begin{align*}
%\dot{x} &= f(x,u) \\
%y &= h(x,u).
%\end{align*}
%The above system is said to be be \emph{strictly passive} if there exists a positive semidefinite function $W(x)$, and a positive definite function $\psi(x)$ such that $u^Ty \ge \dot{W} + \psi(x)$. The same system is said to be \emph{output strictly passive} if there exists a positive semidefinite function $W(x)$, and some function $\rho(y)$ such that $u^Ty \ge \dot{W} + y^T\rho(y)$, and $y^T\rho(y)>0$ for $y\ne 0$ \cite{willems1972dissipative, Khalil2002}.
%\end{definition}

%%%%%%%%%%%%%%%%%%%%%%%%%%%%%%%%%%%%%%%%
%%%%%%%%%%%%%%%%%%%%%%%%%%%%%%%%%%%%%%%%
\section{Model and problem setup}
\label{sec:model}
%\begin{figure}
%\center
%\begin{tikzpicture}[american voltages]
%\draw
%% Line
%(0,-2) node[ground] {}
%to 	[C, l=$C_i$] (0,-0.5)
%to 	[short, -*]  (0,0)
%to  [R, l=$R_{ij}$, i=$I_{ij}$] (3,0)
%to  [short, -*] (4,0)
%to	(4,-0.5) [C, l=$C_j$]
%to	(4,-2) node[ground] {}
%(0,0.32) node[]  {$V_i$}
%(4,0.32) node[]  {$V_j$}
%% Converter i
%(-2,0) node[ground] {}
%to [american current source, l=$I_i^\text{inj}$] (0,0)
%to [short, l={}] (0,0)
%% Converter j
%(6,0) node[ground] {}
%to [american current source, l_=$I_j^\text{inj}$] (4,0)
%to [short, l={}] (4,0);
%\end{tikzpicture}
%\caption{R-line model of a transmission line.}
%\label{fig:R_line_model}
%\end{figure}
We will here give a unified model for an MTDC system interconnected with several asynchronous AC systems.
We consider an MTDC transmission system consisting of $n$ converters, each connecting to an AC system, denoted $1, \dots, n$. The converters are assumed to be connected by an MTDC transmission grid. The dynamics of converter $i$ is assumed to be given by
\begin{align}
\begin{aligned}
C_i \dot{V}_i &= -\sum_{j\in \mathcal{N}_i} \frac{1}{R_{ij}}(V_i -V_j) + I_i^{\text{inj}} ,
\end{aligned}
\label{eq:voltage}
\end{align}
where $V_i$ is the voltage of converter $i$, $C_i>0$ is its capacitance, and $I_i^{\text{inj}}$ is the injected current from an AC grid connected to the DC converter.  The constant $R_{ij}$ denotes the resistance of the HVDC transmission line connecting the converters $i$ and $j$. %This simple DC line model is illustrated in Figure \ref{fig:R_line_model}.
The graph corresponding to the HVDC line connections is assumed to be connected.
 The AC system is assumed to consist of a single generator which is connected to the corresponding DC converter, representing an aggregate model of the AC grid. The dynamics of the AC system are given by the swing equation \cite{machowski2008power}:
\begin{align}
m_i \dot{\omega}_i &=  P^\text{gen}_i + P_i^\text{nom} + P_i^{{m}} - P_i^{\text{inj}}, \label{eq:frequency}
\end{align}
where $m_i>0$ is its moment of inertia. The constant $P_i^\text{nom}$ is the nominal generated power, $P^m_i$ is the uncontrolled deviation from the nominal generated power, $P_i^{\text{inj}}$ is the power injected to the DC system through the convertera, $P^\text{gen}_i$ is the generated power by the generation control (primary or secondary) of generator $i$, respectively.
The control objective can now be stated as follows.
\begin{objective}
\label{obj:1}
The generation control action should be asymptotically distributed fairly amongst the generators, i.e.,
\begin{align*}
\lim_{t\rightarrow \infty } \left|  P_i^{\text{gen}}(t) + \frac{1}{n} \sum_{i=1}^n P_i^m  \right| \le  e^{\text{gen}} \quad \forall i = 1, \dots, n,
\end{align*}
where $e^{\text{gen}}$ is a given scalar.
Furthermore, the frequencies of the AC systems, as well as the converter voltages, should not deviate too far from their nominal values, i.e.,
\begin{align*}
\lim_{t\rightarrow \infty} |V_i(t)-V_i^{\text{ref}}| &\le e^{{V} } \quad \forall i = 1, \dots, n \\
\lim_{t\rightarrow \infty} |\omega_i(t)-\omega^{\text{ref}}| &\le e^{{\omega} } \quad \forall i = 1, \dots, n, \\
\end{align*}
where $V_i^{\text{ref}}$ is the reference DC voltage of converter $i$, $\omega^{\text{ref}}$ is the reference frequency and $e^{{V}}$ and $e^{{\omega} }$ are given scalars.
\end{objective}
\begin{remark}
It is in general not possible to have $e^V=0$, since this does not allow for the  currents in the HVDC grid to change. The bounds $e^\text{gen}$ and $e^\omega$ on the other hand, can in theory be zero. However, this may be hard to achieve with limited communication.
 \end{remark}

%%%%%%%%%%%%%%%%%%%%%%%%%%%%%%%%%%%%%%%%
%%%%%%%%%%%%%%%%%%%%%%%%%%%%%%%%%%%%%%%%
\section{Decentralized MTDC control}
\label{sec:dec_control}
In this section we summarize previous results on decentralized control of MTDC systems. The detailed results and proofs can be found in \cite{andreasson2014distributed}.
%\subsection{Distributed averaging control}
%It was shown in \cite{Andreasson2014_IFAC} that a decentralized proportional droop controller cannot satisfy Objective \ref{obj:1} and \ref{obj:2} simultaneously. Furthermore, a proportional controller can only satisfy Objective \ref{obj:1} or \ref{obj:2} if the proportional gains tend  to infinity or $0$, respectively. A distributed controller was proposed, which was shown to satisfy Objective \ref{obj:1} and \ref{obj:2} simultaneously. However, this controller requires one specific converter to measure and control the voltage. This controller thus has the disadvantage of being sensitive to failure of this specific terminal.
We consider the following decentralized frequency droop controllers for the control of the AC systems connected through an MTDC network
\begin{align}
P^\text{gen}_i=-K_i^{\text{droop}} (\omega_i-\omega^{\text{ref}}),
\label{eq:droop_control}
\end{align}
 where $\omega_i$ is the frequency of the generator, $\omega^{\text{ref}}$ is the reference frequency (e.g., $50$ or $60$ Hz), and $K_i^{\text{droop}}>0$ .
The local controllers governing the power injections into the MTDC network are given by
\begin{align}
\label{eq:voltage_control}
\begin{aligned}
P_i^{\text{inj}} = P_i^{\text{inj, nom}} + K_i^{{\omega}} (\omega_i - \omega^{\text{ref}}) + K_i^{{V}}(V_i^{\text{ref}}-V_i),
\end{aligned}
\end{align}
where $P_i^{\text{inj, nom}}$ is the nominal injected power, and $K_i^{{\omega}}>0$ and $K_i^{{V}}>0$ are positive controller gains for all $i=1, \dots, n$. The HVDC converter is assumed to be perfect and instantaneous, i.e., injected power on the AC side is immediately converted to DC power without losses. Furthermore the dynamics of the converter are ignored, implying that the converter tracks the output of controller \eqref{eq:voltage_control} perfectly. This assumption is reasonable due to the dynamics of the converter typically being orders of magnitudes faster than the AC dynamics \cite{kundur1994power}.
The relation between the injected HVDC current and the injected AC power is thus given by
\begin{align}
V_iI_i^{\text{inj}} = P_i^{\text{inj}}. \label{eq:power-current_nonlinear}
\end{align}
By assuming $V_i=V^{\text{nom}}\; \forall i=1, \dots, n$, we obtain
\begin{align}
V^{\text{nom}}I_i^{\text{inj}} = P_i^{\text{inj}}. \label{eq:power-current}
\end{align}
Furthermore, we assume that the nominal generated power equals to the nominal injected power.
\begin{assumption}
\label{ass:balances_power}
$P^\text{nom}_i=P^\text{inj, nom}_i \; \forall i=1, \dots, n$.
\end{assumption}

%%%%%%%%%%%%%%%%%%%%%%%%%%%%%%%%%%%%%%%%
%\subsection{Stability analysis}
%\label{sec:dec_control_stability}
%We now study the stability of controller \eqref{eq:voltage_control}.
\begin{theorem}
\label{th:stability_passivity_1}
The equilibrium of the decentralized MTDC control system with dynamics given by \eqref{eq:voltage}, \eqref{eq:frequency}, \eqref{eq:droop_control}, \eqref{eq:voltage_control}, and where \eqref{eq:power-current} holds, is globally asymptotically stable \cite{andreasson2014distributed}.
\end{theorem}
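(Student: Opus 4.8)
The plan is to reduce the closed-loop system to a linear time-invariant system in deviation coordinates and then exhibit a quadratic Lyapunov function of the form ``weighted capacitor energy plus weighted generator kinetic energy''. First I would substitute the droop controller \eqref{eq:droop_control}, the converter controller \eqref{eq:voltage_control} and the algebraic power--current relation \eqref{eq:power-current} into \eqref{eq:voltage}--\eqref{eq:frequency}. Stacking the voltages and frequencies into $V,\omega\in\re^n$, writing $C=\diag(C_i)$, $M=\diag(m_i)$, collecting the gains into diagonal matrices $K^{\text{droop}},K^\omega,K^V$, and letting $\mathcal L$ be the weighted Laplacian of the HVDC graph with edge weights $1/R_{ij}$, the closed loop becomes an affine system $\dot x=Ax+b$ with $x=(V^\top,\omega^\top)^\top$; Assumption~\ref{ass:balances_power} is precisely what cancels the constant nominal-power terms in \eqref{eq:frequency}, so that $b$ is well defined and (once $A$ is shown to be Hurwitz, hence invertible) the equilibrium $x^\star=-A^{-1}b$ is the unique one. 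Since for a linear system global asymptotic stability of the equilibrium is equivalent to $A$ being Hurwitz, it suffices to produce a symmetric $P>0$ with $A^\top P+PA<0$.

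With $\tilde V=V-V^\star$ and $\tilde\omega=\omega-\omega^\star$, subtracting the equilibrium equations gives
\begin{align*}
C\dot{\tilde V} &= -\Big(\mathcal L+\tfrac{1}{V^{\text{nom}}}K^V\Big)\tilde V+\tfrac{1}{V^{\text{nom}}}K^\omega\tilde\omega,\\
M\dot{\tilde\omega} &= K^V\tilde V-\big(K^{\text{droop}}+K^\omega\big)\tilde\omega.
\end{align*}
I would take $\mathcal W=\tfrac12\tilde V^\top C\tilde V+\tfrac12\tilde\omega^\top YM\tilde\omega$, i.e.\ $P=\diag(C,YM)$, with $Y=\diag(y_i)>0$ a diagonal weight still to be fixed. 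Differentiating along trajectories, the diagonal blocks contribute $-\tilde V^\top(\mathcal L+\tfrac{1}{V^{\text{nom}}}K^V)\tilde V-\tilde\omega^\top Y(K^{\text{droop}}+K^\omega)\tilde\omega$, which is already negative definite because $K^V>0$ makes $\mathcal L+\tfrac{1}{V^{\text{nom}}}K^V>0$, while the interconnection contributes the indefinite cross term $\tilde V^\top\big(\tfrac{1}{V^{\text{nom}}}K^\omega+YK^V\big)\tilde\omega$ (all matrices involved here are diagonal, so no transposition subtleties arise). Hence $\dot{\mathcal W}<0$ for all nonzero $(\tilde V,\tilde\omega)$ if and only if the symmetric $2n\times 2n$ matrix with diagonal blocks $\mathcal L+\tfrac{1}{V^{\text{nom}}}K^V$ and $Y(K^{\text{droop}}+K^\omega)$ and off-diagonal block $-\tfrac12(\tfrac{1}{V^{\text{nom}}}K^\omega+YK^V)$ is positive definite.

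The core of the argument --- and the step I expect to be the main obstacle --- is to show that $Y$ can be chosen so that this block matrix is positive definite for \emph{every} admissible choice of gains, capacitances, inertias and $V^{\text{nom}}$. I would take a Schur complement with respect to the (invertible) first block and bound $(\mathcal L+\tfrac{1}{V^{\text{nom}}}K^V)^{-1}\le V^{\text{nom}}(K^V)^{-1}$ using $\mathcal L\ge0$; this decouples the condition into the $n$ scalar inequalities $2y_iK_i^V(2K_i^{\text{droop}}+K_i^\omega)-V^{\text{nom}}(y_iK_i^V)^2>(K_i^\omega)^2/V^{\text{nom}}$. The left-hand side is concave in $y_i$, with maximum over $y_i>0$ equal to $(2K_i^{\text{droop}}+K_i^\omega)^2/V^{\text{nom}}$, so a feasible $y_i$ (e.g.\ $y_i=(2K_i^{\text{droop}}+K_i^\omega)/(V^{\text{nom}}K_i^V)$) exists exactly because $2K_i^{\text{droop}}+K_i^\omega>K_i^\omega>0$. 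This is the structural reason the theorem holds for all parameters: the frequency damping $K_i^{\text{droop}}+K_i^\omega$ of the swing equation always strictly exceeds the gain $K_i^\omega$ through which the voltage loop feeds back into it, so the destabilizing cross coupling can never dominate the dissipation. Once such a $Y$ is fixed, $\mathcal W$ is a radially unbounded strict Lyapunov function, and global asymptotic stability of the equilibrium follows from Lyapunov's direct method for linear systems. The only delicate point is checking that the crude bound $\mathcal L\ge0$ used in the Schur step is not too conservative --- the computation above shows it leaves just enough slack, the margin being governed by $K_i^{\text{droop}}$.
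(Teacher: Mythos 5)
Your proof is correct. One caveat on the comparison: this paper does not itself prove Theorem~\ref{th:stability_passivity_1} (it is imported from \cite{andreasson2014distributed}), but the proof it gives for the analogous Theorem~\ref{th:stability_passivity_2} exhibits the intended technique, and it belongs to the same family as yours: a block-diagonal quadratic Lyapunov function of the form ``capacitor energy plus weighted generator kinetic energy.'' The difference is in how the kinetic weight is chosen. The paper fixes it a priori as $K^\omega (K^V)^{-1}$ (in your normalization, $y_i = K_i^\omega/(V^{\text{nom}} K_i^V)$); with this choice the two cross terms coincide, the derivative collapses to $-[\bar\omega^T\; \bar V^T]\,Q_1\,[\bar\omega^T\; \bar V^T]^T - V^{\text{nom}}\bar V^T\mathcal{L}_R\bar V$ with the Laplacian kept aside as a separate negative semidefinite term, and the Schur complement of $Q_1$ evaluates \emph{exactly} to $K^\omega(K^V)^{-1}K^{\text{droop}}>0$, with no bounding of the network term required. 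You instead leave the weight $Y$ free, absorb $\mathcal{L}$ into the Schur complement, bound $(\mathcal{L}+\tfrac{1}{V^{\text{nom}}}K^V)^{-1}\le V^{\text{nom}}(K^V)^{-1}$, and optimize the resulting decoupled scalar quadratics, landing on $y_i=(2K_i^{\text{droop}}+K_i^\omega)/(V^{\text{nom}}K_i^V)$. Both weights satisfy your feasibility inequality (the paper's choice gives $4K_i^\omega K_i^{\text{droop}}+(K_i^\omega)^2$ versus $(K_i^\omega)^2$ on the right), and both isolate the same structural reason the result holds for all gains: strict positivity hinges exactly on $K^{\text{droop}}_i>0$. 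Your route is marginally more conservative because of the $\mathcal{L}\ge 0$ relaxation, but it buys an explicit description of the whole set of admissible weights rather than one judicious guess; the paper's route is computationally cleaner. A minor point you gloss over correctly: the constant forcing terms (and Assumption~\ref{ass:balances_power}) only relocate the equilibrium and play no role in the Hurwitz argument.
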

%\begin{remark}
%Note that although we can guarantee stability of the decentralized MTDC control system, Theorem~\ref{th:stability_passivity_1} gives no further insight about the equilibrium. Nor does it guarantee that Objective \ref{obj:1} is fulfilled. In the following section, we will address these issues.
%\end{remark}
%%%%%%%%%%%%%%%%%%%%%%%%%%%%%%%%%%%%%%%%
%%%%%%%%%%%%%%%%%%%%%%%%%%%%%%%%%%%%%%%%
%\subsection{Equilibrium analysis}
%\label{sec:equilibrium}
%\label{sec:dec_control_equilibrium}
We will now study the asymptotic voltages and frequencies as well as the generated power of the MTDC system.
 We make the following assumption on the controller gains.
\begin{assumption}
\label{ass:scalar_1} The controller gains satisfy
$K^\omega_i=k^\omega, K^\text{droop}_i=k^\text{droop}, K^V_i=k^V \; \forall i=1, \dots, n$.
\end{assumption}
\begin{theorem}
\label{th:equilibrium}
Given that Assumption \ref{ass:scalar_1} holds, then for the HVDC and AC systems \eqref{eq:voltage}, \eqref{eq:frequency} with generation control \eqref{eq:droop_control} and converter control \eqref{eq:voltage_control} and where \eqref{eq:power-current} holds, Objective \ref{obj:1} is satisfied for $e^{\text{gen}} = e^{\text{gen}}_\text{dec}$, $e^V=e^V_\text{dec}$ and $e^\omega=e^\omega_\text{dec}$ \cite{andreasson2014distributed}, where
\begin{align*}
e^{\text{gen}}_\text{dec} &=\frac{k^\text{droop}\max_i P^m_i}{k^\text{droop}+k^\omega} \left( (n-1) + \frac{k^V}{V^\text{nom}} \sum_{i=2}^n \frac{1}{\lambda_i(\mathcal{L}_R)} \right) \\
e^V_\text{dec} &= \frac{k^\omega}{nk^\text{droop}k^V}  \left| \sum_{i=1}^n P^m_i \right| {+}  \frac{k^\omega \max_i \left| P^m_i  \right| }{(k^\omega {+} k^\text{droop})V^\text{nom}} \sum_{i=2}^n \frac{1}{\lambda_i(\mathcal{L}_R)} \\
e^\omega_\text{dec} &= \frac{1}{n k^\text{droop}} \left| \sum_{i=1}^n P^m_i \right| \\
&\;\;\;\; + \frac{\max_i P^m_i}{k^\text{droop}+k^\omega} \left( (n-1) + \frac{k^V}{V^\text{nom}} \sum_{i=2}^n \frac{1}{\lambda_i(\mathcal{L}_R)} \right).
\end{align*}
\end{theorem}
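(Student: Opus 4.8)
The plan is to lean on Theorem~\ref{th:stability_passivity_1}: global asymptotic stability forces the closed loop to have a unique equilibrium, and the limits in Objective~\ref{obj:1} are just the coordinates of that point, so the statement reduces to computing the equilibrium and estimating three families of deviations there. First I would set $\dot V_i=0$, $\dot\omega_i=0$ in \eqref{eq:voltage} and \eqref{eq:frequency}, substitute the controllers \eqref{eq:droop_control}, \eqref{eq:voltage_control}, the power--current law \eqref{eq:power-current}, Assumption~\ref{ass:balances_power}, and the scalar-gain Assumption~\ref{ass:scalar_1}, and eliminate $P^{\text{gen}}$, $P^{\text{inj}}$, $I^{\text{inj}}$. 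With $\tilde\omega=\omega-\omega^{\text{ref}}\mathbf{1}$, $\hat V=V-V^{\text{nom}}\mathbf{1}$ and $\beta=k^{\text{droop}}k^V/(k^{\text{droop}}+k^\omega)$, and after the nominal power terms cancel, this leaves the nodal identity $(k^{\text{droop}}+k^\omega)\tilde\omega_i=P_i^m+k^V\hat V_i$ together with the DC power-balance identity $V^{\text{nom}}\mathcal{L}_R\hat V=P^{\text{inj}}=P^m-k^{\text{droop}}\tilde\omega$, where $\mathcal{L}_R$ is the conductance-weighted Laplacian of the DC grid. Eliminating $\tilde\omega$ collapses these to the single linear equation $(V^{\text{nom}}\mathcal{L}_R+\beta I)\hat V=\tfrac{k^\omega}{k^{\text{droop}}+k^\omega}P^m$.

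I would then split every vector into its average part along $\mathbf{1}$ and its disagreement part in $\mathbf{1}^\perp$, writing $\overline{x}$ for the average of the entries of $x$. Left-multiplying the DC identity by $\mathbf{1}^T$ and using $\mathbf{1}^T\mathcal{L}_R=0$ gives $\mathbf{1}^TP^{\text{inj}}=0$, hence $\mathbf{1}^T\tilde\omega=\tfrac{1}{k^{\text{droop}}}\mathbf{1}^TP^m$, pinning the average frequency deviation to $\tfrac{1}{nk^{\text{droop}}}\sum_jP_j^m$ --- the first term of $e^\omega_{\text{dec}}$. Averaging the nodal identity then yields the average voltage deviation $\tfrac{k^\omega}{nk^Vk^{\text{droop}}}\sum_jP_j^m$ (the first term of $e^V_{\text{dec}}$), and averaging \eqref{eq:droop_control} gives $\overline{P^{\text{gen}}}=-\overline{P^m}$, so that $P_i^{\text{gen}}+\tfrac1n\sum_jP_j^m$ is precisely the $i$-th disagreement component of $P^{\text{gen}}$, which a short computation shows equals $-\tfrac{k^{\text{droop}}}{k^{\text{droop}}+k^\omega}\bigl[(P_i^m-\overline{P^m})+k^V(\hat V_i-\overline{\hat V})\bigr]$.

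For the disagreement terms I would restrict the collapsed equation to $\mathbf{1}^\perp$, where $V^{\text{nom}}\mathcal{L}_R+\beta I$ is positive definite, and dominate its restricted inverse, in the positive semidefinite order, by $(V^{\text{nom}})^{-1}\mathcal{L}_R^{\dagger}$, whose nonzero eigenvalues are $1/\lambda_i(\mathcal{L}_R)$, $i=2,\dots,n$. Bounding the entries of $\mathcal{L}_R^{\dagger}$ by $\sum_{i=2}^{n}1/\lambda_i(\mathcal{L}_R)$ (using $\|v_i\|_\infty\le 1$ for the Laplacian eigenvectors), and combining this with elementary estimates of $P^m-\overline{P^m}\mathbf{1}$ against $\max_iP_i^m$ and the node count, produces the bound on $\|\hat V-\overline{\hat V}\mathbf{1}\|_\infty$ carrying the factor $\tfrac{k^V}{V^{\text{nom}}}\sum_{i=2}^{n}1/\lambda_i(\mathcal{L}_R)$; pushing this through the nodal identity bounds the disagreement of $\tilde\omega$, and multiplying by $k^{\text{droop}}$ bounds that of $P^{\text{gen}}$. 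Adding the exact average contributions to the estimated disagreement contributions via the triangle inequality then delivers $e^{\text{gen}}_{\text{dec}}$, $e^{V}_{\text{dec}}$ and $e^{\omega}_{\text{dec}}$. I expect this last step to be the crux: turning the spectral control of $\mathcal{L}_R^{\dagger}$ into componentwise $\ell^\infty$ bounds with the advertised dependence on $n-1$, $\max_iP_i^m$ and $\sum_{i=2}^{n}1/\lambda_i(\mathcal{L}_R)$, while keeping the gain prefactors in the stated form; everything before it is bookkeeping once the equilibrium is written down.
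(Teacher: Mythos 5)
Your proposal is correct and follows essentially the same route the paper takes: the paper itself only cites \cite{andreasson2014distributed} for Theorem~\ref{th:equilibrium}, but its in-text proof of the analogous Theorem~\ref{th:equilibrium_secondary} proceeds exactly as you do --- reduce to the linear equilibrium equations, decompose along the eigenvectors of $\mathcal{L}_R$ (your average/disagreement split is precisely that decomposition with $v_1=\tfrac{1}{\sqrt{n}}1_n$ separated out, and here the average mode survives to give the $\tfrac{1}{n}\left|\sum_i P^m_i\right|$ terms), and bound the remaining modes term by term using $\lambda_i(V^{\text{nom}}\mathcal{L}_R+\beta I)\ge V^{\text{nom}}\lambda_i(\mathcal{L}_R)$ and $\norm{v_i}_\infty\le 1$. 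The only step needing the same care as in the paper is converting the spectral estimate into the componentwise bound $\left|(v_i)^TP^m\right|\,\norm{v_i}_\infty\le\max_j\left|P^m_j\right|$, which is the identical (and identically loose) inequality used in the proof of Theorem~\ref{th:equilibrium_secondary}, so you are at the paper's level of rigor throughout.
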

In the following section we will show that the above upper bounds can be tightened by a secondary control layer.
%\begin{remark}
%The error bounds $e^{\text{droop}}$ and $e^\omega$ can simultaneously be made arbitrarily small. However, the voltage error bound $e^V$ is lower bounded by a constant. This is of course due to the necessity of a relative voltage drop for having a power flow between converters connected through an HVDC line.
%\end{remark}

%%%%%%%%%%%%%%%%%%%%%%%%%%%%%%%%%%%%%%%%
%%%%%%%%%%%%%%%%%%%%%%%%%%%%%%%%%%%%%%%%
\section{Distributed secondary frequency control}
\label{sec:secondary_frequency_control}
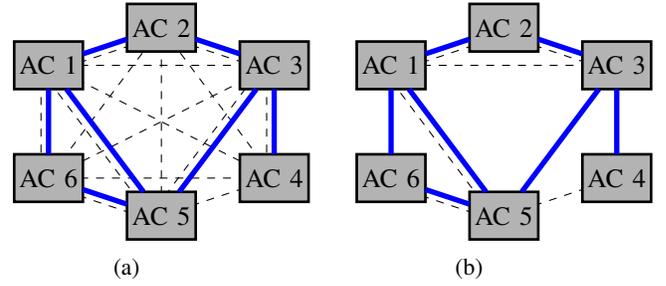
\begin{figure}
\begin{subfigure}[b]{0.17\textwidth}
\begin{tikzpicture}
\GraphInit[vstyle=Normal]
\SetVertexNormal[Shape = rectangle,%
LineWidth = 1pt,%
FillColor = black!30]

% Communication lines
\draw (-0.5,0) node {} -- (2.5,0) [dashed] node {};
\draw (-0.5,-0.1) node {} -- (1,0.4) [dashed] node {};
\draw (2.5,-0.1) node {} -- (1,0.4) [dashed] node {};
\draw (-0.5,-0.2) node {} -- (1,-2.2) [dashed] node {};
\draw (1,-2.1) node {} -- (-0.5,-1.6) [dashed] node {};
\draw (1,-2.0)  node {} -- (2.5,-1.5) [dashed] node {};
\draw (-0.6,-0.0)  node {} -- (-0.6, -1.5) [dashed] node {};
\draw (2.4,-0.0)  node {} -- (2.4, -1.5) [dashed] node {};
\draw (1,0.5)  node {} -- (1, -2) [dashed] node {};
\draw (1,0.5)  node {} -- (2.5, -1.5) [dashed] node {};
\draw (1,0.5)  node {} -- (-0.5, -1.5) [dashed] node {};
\draw (-0.5,0) node {} -- (2.5, -1.5) [dashed] node {};
\draw (2.5,0) node {} -- (-0.5, -1.5) [dashed] node {};
\draw (2.5,0.2) node {} -- (1, -1.8) [dashed] node {};
\draw (-0.5,-1.5) node {} -- (2.5, -1.5) [dashed] node {};

% AC Areas
\Vertex[x=-0.5, y=0] {AC 1}
\Vertex[x=1, y=0.5] {AC 2}
\Vertex[x=2.5, y=0] {AC 3}
\Vertex[x=-0.5, y=-1.5] {AC 6}
\Vertex[x=2.5, y=-1.5] {AC 4}
\Vertex[x=1, y=-2] {AC 5}

% HVDC lines
\SetUpEdge[color=blue]
\renewcommand*{\EdgeLineWidth}{2pt}
\Edge[](AC 1)(AC 2)
\Edge[](AC 2)(AC 3)
\Edge[](AC 3)(AC 4)
\Edge[](AC 3)(AC 5)
\Edge[](AC 6)(AC 5)
\Edge[](AC 1)(AC 6)
\Edge[](AC 1)(AC 5)
\end{tikzpicture}
\caption{}
\label{fig:HVDC.cen}
\end{subfigure}
\qquad \qquad
\begin{subfigure}[b]{0.17\textwidth}
\begin{tikzpicture}
\GraphInit[vstyle=Normal]
\SetVertexNormal[Shape = rectangle,%
LineWidth = 1pt,%
FillColor = black!30]

% Communication lines
\draw (-0.5,0) node {} -- (2.5,0) [dashed] node {};
\draw (-0.5,-0.1) node {} -- (1,0.4) [dashed] node {};
\draw (2.5,-0.1) node {} -- (1,0.4) [dashed] node {};
\draw (-0.5,-0.2) node {} -- (1,-2.2) [dashed] node {};
\draw (1,-2.1) node {} -- (-0.5,-1.6) [dashed] node {};
\draw (1,-2.0)  node {} -- (2.5,-1.5) [dashed] node {};

% AC Areas
\Vertex[x=-0.5, y=0] {AC 1}
\Vertex[x=1, y=0.5] {AC 2}
\Vertex[x=2.5, y=0] {AC 3}
\Vertex[x=-0.5, y=-1.5] {AC 6}
\Vertex[x=2.5, y=-1.5] {AC 4}
\Vertex[x=1, y=-2] {AC 5}

% HVDC lines
\SetUpEdge[color=blue]
\renewcommand*{\EdgeLineWidth}{2pt}
\Edge[](AC 1)(AC 2)
\Edge[](AC 2)(AC 3)
\Edge[](AC 3)(AC 4)
\Edge[](AC 3)(AC 5)
\Edge[](AC 6)(AC 5)
\Edge[](AC 1)(AC 6)
\Edge[](AC 1)(AC 5)

\end{tikzpicture}
\caption{}
\label{fig:HVDC.dist}
\end{subfigure}
\caption{Illustration of the HVDC grid and the communication network topologies. The HVDC lines are illustrated with solid lines, while comminication lineas are illustrated with dashed lines. \eqref{fig:HVDC.cen} shows the structure of the complete communication structure of  \eqref{eq:droop_control_secondary}, while \eqref{fig:HVDC.dist} shows the structure of the communication structure of  \eqref{eq:droop_control_secondary_distributed}.}
\label{fig:HVDC}
\end{figure}
In this section we study a distributed secondary frequency controller to tighten the stationary error bounds of the closed-loop system given in Theorem~\ref{th:equilibrium}. To implement a distributed controller, we assume the existence of a communication, enabling communication between all AC generators. We will consider two distributed secondary controllers; a controller where  any pair of generators can communicate directly, and a controller where only neighbouring generators can communicate directly. These two controllers correspond require a complete communication graph and a communication graph containing a spanning tree, respectively. The corresponding topologies of these controllers are illustrated in Figure~\ref{fig:HVDC}.
%The general idea is to only modify the frequency controllers of the AC systems, while leaving the converter controllers \eqref{eq:voltage_control} unchanged.
The first generation controller of the AC systems is given by
\begin{align}
P^\text{gen}_i &=- K_i^{\text{droop}} (\omega_i-\omega^{\text{ref}}) -  \frac{K^V_i}{K^\omega_i} K^\text{droop, I}_i \frac{1}{n} \sum_{i=1}^n  \eta_i \nonumber \\
\dot{\eta}_i &= K_i^{\text{droop,I}}(\omega_i-\omega^{\text{ref}}) - \gamma \eta_i,
\label{eq:droop_control_secondary}
\end{align}
where $\gamma, K^\text{droop, I}_i, \; i=1, \dots, n$ are positive constants. If $\gamma=0$, then $\eta_i$ becomes a scaled integral state of the local frequency deviation $(\omega_i-\omega^{\text{ref}})$. %The controller \eqref{eq:droop_control_secondary} can be interpreted as a combined frequency droop controller and secondary frequency control layer, which also incorporates the states from remote generators.
The second generation controller of the AC systems is given by
\begin{align}
P^\text{gen}_i &=- K_i^{\text{droop}} (\omega_i-\omega^{\text{ref}}) -  \frac{K^V_i}{K^\omega_i} K^\text{droop, I}_i \eta_i \nonumber \\
\dot{\eta}_i &= K_i^{\text{droop,I}}(\omega_i-\omega^{\text{ref}}) - \delta \sum_{j\in \mathcal{N}_i} c_{ij} (\eta_i-\eta_j).
\label{eq:droop_control_secondary_distributed}
\end{align}
We assume $c_{ij} = c_{ji}$, i.e., the communication graph is undirected.
The above controller can be interpreted as a distributed PI-controller, with a distributed consensus filter acting on the integral states $\eta_i$. In contrast to the controller \eqref{eq:droop_control_secondary}, \eqref{eq:droop_control_secondary_distributed} does not rely on a complete communication graph. Note that we make no assumption that the communication topology resembles the topology of the MTDC system.

%%%%%%%%%%%%%%%%%%%%%%%%%%%%%%%%%%%%%%%%
\subsection{Stability analysis}
\label{sec:secondary_frequency_control_stability}
Combining the voltage dynamics \eqref{eq:voltage}, the frequency dynamics \eqref{eq:frequency} and the generation control \eqref{eq:droop_control_secondary} or \eqref{eq:droop_control_secondary_distributed}, the converter controller \eqref{eq:voltage_control} and the power-current relationship \eqref{eq:power-current}, together with Assumption~\ref{ass:balances_power} and defining $\eta = [\eta_1, \dots, \eta_n]^T$, $\hat{\omega}=\omega - \omega^\text{ref}1_n$ and $\hat{V}=V - V^\text{ref}$, where $V^\text{ref} = [V_1^\text{ref}, \dots, V_n^\text{ref}]$, we obtain the closed-loop dynamics given by \eqref{eq:cl_dynamics_vec_delta_int} or \eqref{eq:cl_dynamics_vec_delta_int_distributed}, respectively where 
 $M=\diag({m_1}^{-1}, \hdots , {m_n}^{-1})$ is a matrix of inverse generator inertia, 
 $E=\diag([C_1^{-1}, \dots, C_n^{-1}])$ is a matrix of electrical elastances, 
 $K^{\text{droop,I}} = \diag([K^{\text{droop,I}}_1, \dots , K^{\text{droop,I}}_n])$, and $\mathcal{L}_c$ is the weighted Laplacian matrix of the communication graph with edge-weights $c_{ij}$. The details of the derivations have been omitted, but the derivation follows the steps taken in Section~IV in \cite{andreasson2014distributed}.
\begin{figure*}
\begin{align}
\begin{bmatrix}
\dot{\hat{\omega}} \\ \dot{\hat{V}} \\ \dot{\eta}
\end{bmatrix}
&= {\begin{bmatrix}
-M(K^\omega + K^{\text{droop}}) & MK^V & - \frac{1}{n} M K^V (K^\omega)^{-1} K^{\text{droop,I}} 1_{n\times n} \\
\frac{1}{V^{\text{nom}}}EK^\omega & -E\left(\mathcal{L}_R + \frac{K^V}{V^{\text{nom}}} \right) & 0_{n\times n}\\
K^{\text{droop,I}} & 0_{n\times n} & -\gamma I_n
\end{bmatrix}}
\begin{bmatrix}
\hat{\omega} \\ \hat{V} \\ \eta
\end{bmatrix} +
\begin{bmatrix}
M P^m \\
0_{n}\\
0_{n}
\end{bmatrix} \label{eq:cl_dynamics_vec_delta_int} \\
\begin{bmatrix}
\dot{\hat{\omega}} \\ \dot{\hat{V}} \\ \dot{\eta}'
\end{bmatrix}
&= {\begin{bmatrix}
-M(K^\omega + K^{\text{droop}}) & MK^V & -  M K^V (K^\omega)^{-1} K^{\text{droop,I}} 1_n \\
\frac{1}{V^{\text{nom}}}EK^\omega & -E\left(\mathcal{L}_R + \frac{K^V}{V^{\text{nom}}} \right) & 0_n\\
\frac{1}{n} 1_n^T K^{\text{droop,I}} & 0_n^T & -\gamma
\end{bmatrix}}
\begin{bmatrix}
\hat{\omega} \\ \hat{V} \\ \eta'
\end{bmatrix} +
\begin{bmatrix}
M P^m \\
0_{n}\\
0
\end{bmatrix} \label{eq:cl_dynamics_vec_delta_int_projected} \\
\begin{bmatrix}
\dot{\hat{\omega}} \\ \dot{\hat{V}} \\ \dot{\eta}
\end{bmatrix}
&= {\begin{bmatrix}
-M(K^\omega + K^{\text{droop}}) & MK^V & - M K^V (K^\omega)^{-1} K^{\text{droop,I}} \\
\frac{1}{V^{\text{nom}}}EK^\omega & -E\left(\mathcal{L}_R + \frac{K^V}{V^{\text{nom}}} \right) & 0_{n\times n}\\
K^{\text{droop,I}} & 0_{n\times n} & -\delta \mathcal{L}_c
\end{bmatrix}}
\begin{bmatrix}
\hat{\omega} \\ \hat{V} \\ \eta
\end{bmatrix} +
\begin{bmatrix}
M P^m \\
0_{n}\\
0_{n}
\end{bmatrix} \label{eq:cl_dynamics_vec_delta_int_distributed}
\end{align}
\noindent\makebox[\linewidth]{\rule{\textwidth}{0.4pt}}
\end{figure*}
Clearly the representation \eqref{eq:cl_dynamics_vec_delta_int} is not minimal with respect to the output $y=[\hat{\omega}^T, \hat{V}^T]^T$, since the integral states $\eta$ are redundant. It is easily shown that substituting the projection
\begin{align}
\eta' &= \frac{1}{n} 1_n^T \eta, \label{eq:eta_transformation}
\end{align}
in \eqref{eq:cl_dynamics_vec_delta_int}, the output dynamics remain unchanged. Even though the dynamics \eqref{eq:cl_dynamics_vec_delta_int} are redundant, they show that the secondary control layer can be implemented distributively. Each AC generator needs only to compute its local integral state, and communicate this to the remaining generators.
In order to prove stability of the closed-loop system, it is however essential to consider the reduced system after applying the projection \eqref{eq:eta_transformation}, which is given by \eqref{eq:cl_dynamics_vec_delta_int_projected}. This system can be interpreted as a centralized implementation of \eqref{eq:cl_dynamics_vec_delta_int}, where the single integral state $\eta'$ is computed centrally with access to all frequency measurements.

Assume that the system matrices of \eqref{eq:cl_dynamics_vec_delta_int_projected} and \eqref{eq:cl_dynamics_vec_delta_int_distributed} are full-rank, which ensures that unique equilibria of \eqref{eq:cl_dynamics_vec_delta_int_projected} and \eqref{eq:cl_dynamics_vec_delta_int_distributed}  exist. Denote these equilibria $x_{0,1}=[\omega_{0,1}^T, V_{0,1}^T, \eta_{0,1}^T]^T$ and $x_{0,2}=[\omega_{0,2}^T, V_{0,2}^T, \eta_{0,2}^T]^T$, respectively. Define $\bar{x}_1\triangleq [\bar{\omega}_1^T, \bar{V}_1^T, \bar{\eta}_1^T]^T =[\hat{\omega}^T, \hat{V}^T, \eta^T]^T - [\omega_{0,1}^T, V_{0,1}^T, \eta_{0,1}^T]^T$ and $\bar{x}_2$, mutatis mutandis. Now: 
\begin{align}
\dot{\bar{x}}_1 = A \bar{x}_1 \label{eq:dynamics_A_distributed_shifted_1} \\
\dot{\bar{x}}_2 = A \bar{x}_2
\label{eq:dynamics_A_distributed_shifted_2}
\end{align}
with the origin as the unique equilibria of both above dynamical systems. We are now ready to show the main stability result of this section.

\begin{theorem}
\label{th:stability_passivity_2}
The equilibria of the systems defined by \eqref{eq:cl_dynamics_vec_delta_int_projected} and \eqref{eq:cl_dynamics_vec_delta_int_distributed} are globally asymptotically stable.
\end{theorem}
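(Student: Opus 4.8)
The plan is to exhibit a common quadratic Lyapunov function for the shifted error dynamics \eqref{eq:dynamics_A_distributed_shifted_1} and \eqref{eq:dynamics_A_distributed_shifted_2}, exploiting the fact that both systems share the same structural form as the closed-loop system analyzed in Theorem~\ref{th:stability_passivity_1} with the only change being the addition of the diagonal block $-\gamma I_n$ (resp.\ $-\delta \mathcal{L}_c$) and the $\eta$-coupling into the $\hat\omega$ equation. Since the excerpt grants us global asymptotic stability of the decentralized system \eqref{eq:cl_dynamics_vec_delta_int} (minus the $\eta$ states), the natural approach is to reuse the Lyapunov function from \cite{andreasson2014distributed} for the $(\hat\omega,\hat V)$-subsystem and augment it with a term in the integral states. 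Concretely, I would look for $W(\bar x) = \bar\omega^T P_\omega \bar\omega + \bar V^T P_V \bar V + \bar\eta^T P_\eta \bar\eta$ with diagonal positive definite $P_\omega, P_V, P_\eta$ chosen so that the cross terms generated by the off-diagonal blocks telescope. Because we are free to pick the weights, the first step is to rescale coordinates so that the $\hat\omega$–$\hat V$ coupling becomes skew-symmetric (this is exactly the passivity/port-Hamiltonian structure used for Theorem~\ref{th:stability_passivity_1}): choosing $P_\omega \propto (E \text{-weighted})$ and $P_V \propto (M\text{-weighted})$ appropriately makes $\bar\omega^T P_\omega M K^V \bar V$ cancel against $\bar V^T P_V \tfrac{1}{V^{\text{nom}}} E K^\omega \bar\omega$ up to the factor $V^{\text{nom}}$, which is absorbed into the scaling.

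Next I would handle the new coupling terms. In \eqref{eq:cl_dynamics_vec_delta_int_projected} the $\eta'$ block feeds back into $\dot{\hat\omega}$ through $-M K^V (K^\omega)^{-1} K^{\text{droop,I}} 1_n$ and is fed by $\tfrac1n 1_n^T K^{\text{droop,I}} \hat\omega$; in \eqref{eq:cl_dynamics_vec_delta_int_distributed} the analogous terms are $-M K^V (K^\omega)^{-1} K^{\text{droop,I}}$ and $K^{\text{droop,I}} \hat\omega$. With Assumption~\ref{ass:scalar_1} (or its natural analogue for the integral gains), these coupling matrices are scalar multiples of each other's transposes after the same diagonal rescaling that symmetrized the $\hat\omega$–$\hat V$ block, because $K^V (K^\omega)^{-1} K^{\text{droop,I}}$ is diagonal and commutes with $M$; hence by choosing $P_\eta$ proportional to the appropriate scalar times $(MK^V(K^\omega)^{-1})^{-1}$-weighting, the $\bar\omega$–$\bar\eta$ cross terms in $\dot W$ cancel as well. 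What remains of $\dot W$ is then $-2\bar\omega^T P_\omega M(K^\omega+K^{\text{droop}})\bar\omega - 2\bar V^T P_V E(\mathcal L_R + \tfrac{K^V}{V^{\text{nom}}})\bar V - 2\gamma \bar\eta^T P_\eta \bar\eta$ (respectively with $-2\delta\,\bar\eta^T P_\eta \mathcal L_c \bar\eta$ in the last term), which is negative semidefinite: strictly negative in $\bar\omega$ and $\bar V$, and negative semidefinite in $\bar\eta$ (for the distributed case, $\mathcal L_c\ge 0$ with kernel $\mathrm{span}(1_n)$, but that direction is eliminated by the full-rank assumption on the system matrix). Then I would invoke LaSalle's invariance principle: on the set $\dot W = 0$ we have $\bar\omega = 0$ and $\bar V = 0$, and substituting back into the dynamics forces $\bar\eta$ into the kernel of the remaining coupling, which together with the full-rank (unique-equilibrium) assumption pins $\bar\eta = 0$; radial unboundedness of $W$ gives global asymptotic stability.

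The main obstacle I anticipate is verifying that a single set of diagonal weights $P_\omega, P_V, P_\eta$ can simultaneously symmetrize \emph{both} the $\hat\omega$–$\hat V$ block and the $\hat\omega$–$\eta$ block — these two cancellation requirements impose separate proportionality constraints on the weights, and one must check they are compatible. This is where Assumption~\ref{ass:scalar_1} does the real work: with scalar gains $k^\omega, k^V, k^{\text{droop,I}}$, the matrix $MK^V(K^\omega)^{-1}K^{\text{droop,I}} = \tfrac{k^V k^{\text{droop,I}}}{k^\omega} M$ is just a scalar times $M$, exactly the same matrix structure ($M$ times a scalar) that appears in the $MK^V$ block, so both cross-term cancellations are governed by the same ratio $P_\omega M : P_V$ (resp.\ $P_\omega M : P_\eta$) and can be met at once. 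If one wants the result for general (non-scalar) gains as the theorem statement suggests, the cleaner route is to note that the coupling $M K^V$ and $M K^V (K^\omega)^{-1} K^{\text{droop,I}}$ differ only by the \emph{diagonal} right factor $(K^\omega)^{-1}K^{\text{droop,I}}$, so one can still choose $P_V$ and $P_\eta$ independently (one for each cross term) while keeping a common $P_\omega$; the only genuine requirement is $P_\omega, P_V, P_\eta > 0$ diagonal, which is always attainable. A secondary, more technical point is the LaSalle step for the distributed controller: one must confirm that the intersection of $\ker \mathcal L_c$ with the constraint $K^{\text{droop,I}}\bar\eta \in$ (range needed to keep $\dot{\bar\omega}=0$) is trivial, which again follows from the assumed invertibility of the system matrix of \eqref{eq:cl_dynamics_vec_delta_int_distributed}.
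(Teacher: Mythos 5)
Your overall architecture (weighted quadratic Lyapunov function in $(\bar\omega,\bar V,\bar\eta)$, cancellation of the $\bar\omega$--$\bar\eta$ cross terms, then LaSalle to handle the semidefinite $-\delta\mathcal{L}_c$ and $\gamma\to 0$ directions) matches the paper's proof, and your treatment of the integrator coupling and of the invariant set is essentially correct. However, there is a genuine gap in the step where you claim the $\hat\omega$--$\hat V$ coupling can be made skew-symmetric so that $\bar\omega^T P_\omega M K^V \bar V$ cancels against $\bar V^T P_V \tfrac{1}{V^{\text{nom}}} E K^\omega \bar\omega$. Both off-diagonal blocks, $MK^V$ and $\tfrac{1}{V^{\text{nom}}}EK^\omega$, are \emph{positive} diagonal matrices entering the system matrix with the \emph{same} sign, so the two cross terms in $\dot W$ are
\begin{equation*}
\bar\omega^T\Bigl(2P_\omega M K^V + \tfrac{2}{V^{\text{nom}}}K^\omega E P_V\Bigr)\bar V ,
\end{equation*}
and cancellation would require $P_\omega M K^V = -\tfrac{1}{V^{\text{nom}}}K^\omega E P_V$, which is impossible for positive definite $P_\omega,P_V$. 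Congruence by positive diagonal weights can equalize the two cross terms but never flip their sign; this coupling is symmetric-positive, not skew. Consequently your expression for ``what remains of $\dot W$'' silently drops a nonzero indefinite cross term, and the claimed negative semidefiniteness does not follow.

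The missing idea is the one the paper uses: choose the weights ($P_\omega = \tfrac12 K^\omega (K^V)^{-1}M^{-1}$, $P_V = \tfrac{V^{\text{nom}}}{2}C$) so that the two cross terms become \emph{equal} (each reducing to $\bar\omega^T K^\omega \bar V$), collect the $(\bar\omega,\bar V)$ part of $-\dot W$ into the symmetric block matrix
\begin{equation*}
Q_1 = \begin{bmatrix} K^\omega (K^V)^{-1}(K^\omega + K^{\text{droop}}) & -K^\omega \\ -K^\omega & K^V \end{bmatrix},
\end{equation*}
(after discarding the additional $V^{\text{nom}}\mathcal{L}_R\ge 0$ contribution) and prove $Q_1>0$ by the Schur complement: $K^\omega (K^V)^{-1}(K^\omega + K^{\text{droop}}) - K^\omega (K^V)^{-1}K^\omega = K^\omega (K^V)^{-1}K^{\text{droop}}>0$. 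It is exactly the extra droop damping $K^{\text{droop}}$ that lets the diagonal dominate the unavoidable cross coupling; a proof that assumes the coupling away does not use this and would equally ``prove'' stability of systems that are in fact unstable. Your worry about the compatibility of the two cancellation constraints is therefore moot: only the $\bar\omega$--$\bar\eta$ coupling (whose blocks genuinely have opposite signs) is cancelled, while the $\bar\omega$--$\bar V$ coupling must be dominated, not eliminated.
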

\begin{proof}
First consider the Lyapunov function candidate 
\begin{align}
W(\bar{\omega}, \bar{V}, \bar{\eta}') &= \frac 12 \bar{\omega}^T K^\omega (K^V)^{-1} M^{-1}\bar{\omega} + \frac{V^\text{nom}}{2} \bar{V}^T E \bar{V} \nonumber \\
&\;\;\;\; + \frac 12(\bar{\eta}')^2 . \label{eq:lyap_hvdc_secondary_projected}
\end{align}
Clearly $W(\bar{\omega}, \bar{V}, \bar{\eta}')$ is positive definite and radially unbounded. Differentiating \eqref{eq:lyap_hvdc_secondary_projected} with respect to time along trajectories of \eqref{eq:dynamics_A_distributed_shifted_1}, we obtain
\begin{align*}
&\dot{W}(\bar{\omega}, \bar{V}, \bar{\eta}')  \\
&= \bar{\omega}^T K^\omega (K^V)^{-1} M^{-1}\dot{\bar{\omega}} + V^\text{nom} \bar{V}^T E \dot{\bar{V}} + \bar{\eta}' \dot{\bar{\eta}}' \\
&= \bar{\omega}^T \big( -K^\omega (K^V)^{-1}(K^\omega + K^\text{droop})\bar{\omega} \\
&\;\;\;\; + K^\omega \bar{V} - \frac 1n K^\text{droop, I} 1_n \bar{\eta}' \big) \\
&\;\;\;\; + \bar{V}^T \Big( K^\omega \bar{\omega} - (V^\text{nom}\mathcal{L}_R {+} K^V)\bar{V} \Big) \\
&\;\;\;\; + \bar{\eta}'^T \Big( \frac 1n 1_n^T K^\text{droop, I} \bar{\omega}' - \gamma \bar{\eta}' \Big) \\
&= -\bar{\omega}^T \big( -K^\omega (K^V)^{-1}(K^\omega + K^\text{droop})\bar{\omega} \\
&\;\;\;\; +  2 \bar{\omega}^T K^\omega \bar{V} - \bar{V}^T (V^\text{nom}\mathcal{L}_R + K^V)\bar{V}  - \gamma (\bar{\eta}')^2  \\
&= - \begin{bmatrix}
\bar{\omega}^T & \bar{V}^T 
\end{bmatrix}
\underbrace{\begin{bmatrix}
K^\omega (K^V)^{-1}(K^\omega + K^\text{droop}) & -K^\omega \\
-K^\omega & K^V 
\end{bmatrix}}_{\triangleq Q_1}
\begin{bmatrix}
\bar{\omega} \\ \bar{V}
\end{bmatrix}  \\
&\;\;\;\; - \gamma (\bar{\eta}')^2. 
\end{align*} 
Clearly $\dot{W}(\bar{\omega}, \bar{V}, \bar{\eta}')\le 0$ iff the symmetric matrix $Q_1$ is positive definite. By applying the Schur complement condition for positive definiteness to $Q_1$, we see that $Q_1$ is positive definite iff
\begin{eqnarray*}
K^\omega (K^V)^{-1}(K^\omega + K^\text{droop}) - K^\omega (K_V)^{-1} K^\omega \\
= K^\omega (K^V)^{-1} K^\text{droop} > 0.
\end{eqnarray*}
Hence $Q_1$ is always positive definite. If $\gamma > 0$, $\dot{W}(\bar{\omega}, \bar{V}, \bar{\eta}')< 0$, and the origin of  \eqref{eq:dynamics_A_distributed_shifted_1} is thus globally asymptotically stable. If however $\gamma=0$ then $\dot{W}(\bar{\omega}, \bar{V}, \bar{\eta}')< 0$ and the set where $W(\bar{\omega}, \bar{V}, \bar{\eta}')$ is non-decreasing is given by
\begin{align*}
G &= \{ (\bar{\omega}, \bar{V}, \bar{\eta}') | \dot{W}(\bar{\omega}, \bar{V}, \bar{\eta}') =0 \} \\ 
&=  \{ (\bar{\omega}, \bar{V}, \bar{\eta}') | \bar{\eta}' = k \},
\end{align*}
for any $k\in \mathbb{R}$. Clearly the largest invariant set in $G$ is the origin. Thus, by LaSalle's theorem for global stability, the origin  of  \eqref{eq:dynamics_A_distributed_shifted_1} is globally asymptotically stable also for $\gamma=0$. 

Consider now the following Lyapunov function candidate 
\begin{align}
W(\bar{\omega}, \bar{V}, \bar{\eta}) &= \frac 12 \bar{\omega}^T K^\omega (K^V)^{-1} M^{-1}\bar{\omega} + \frac{V^\text{nom}}{2} \bar{V}^T C \bar{V} \nonumber \\
&\;\;\;\; + \frac 12 \bar{\eta}^T \bar{\eta},  \label{eq:lyap_hvdc_secondary}
\end{align}
where  $C=\diag([C_1, \dots, C_n])$. 
Clearly $W(\bar{\omega}, \bar{V}, \bar{\eta})$ is positive definite and radially unbounded. Differentiating \eqref{eq:lyap_hvdc_secondary} with respect to time along trajectories of \eqref{eq:dynamics_A_distributed_shifted_2}, we obtain
\begin{align*}
&\dot{W}(\bar{\omega}, \bar{V}, \bar{\eta})  \\
&= \bar{\omega}^T K^\omega (K^V)^{-1} M^{-1}\dot{\bar{\omega}} + V^\text{nom} \bar{V}^T E \dot{\bar{V}} + \bar{\eta}^T \dot{\bar{\eta}} \\
&= \bar{\omega}^T \big( -K^\omega (K^V)^{-1}(K^\omega + K^\text{droop})\bar{\omega} \\
&\;\;\;\; + K^\omega \bar{V} - K^\text{droop, I} \bar{\eta} \big) \\
&\;\;\;\; + \bar{V}^T \Big( K^\omega \bar{\omega} - (V^\text{nom}\mathcal{L}_R {+} K^V)\bar{V} \Big) \\
&\;\;\;\; + \bar{\eta}^T \Big( K^\text{droop, I} \bar{\omega} - \mathcal{L}_\eta  \Big) \\
&= -\bar{\omega}^T \big( -K^\omega (K^V)^{-1}(K^\omega + K^\text{droop})\bar{\omega} \\
&\;\;\;\; +  2 \bar{\omega}^T K^\omega \bar{V} - \bar{V}^T (V^\text{nom}\mathcal{L}_R + K^V)\bar{V}  - \bar{\eta}^T \mathcal{L}_\eta \bar{\eta}  \\
&= - \begin{bmatrix}
\bar{\omega}^T & \bar{V}^T 
\end{bmatrix}
\underbrace{\begin{bmatrix}
K^\omega (K^V)^{-1}(K^\omega + K^\text{droop}) & -K^\omega \\
-K^\omega & K^V 
\end{bmatrix}}_{\triangleq Q_1}
\begin{bmatrix}
\bar{\omega} \\ \bar{V}
\end{bmatrix}  \\
&\;\;\;\; - \bar{\eta}^T \mathcal{L}_\eta \bar{\eta} \le 0, 
\end{align*} 
since $Q_1$ is positive definite. The set where $W(\bar{\omega}, \bar{V}, \bar{\eta})$ is non-decreasing is given by
\begin{align*}
G &= \{ (\bar{\omega}, \bar{V}, \bar{\eta}) | \dot{W}(\bar{\omega}, \bar{V}, \bar{\eta}) =0 \} \\ 
&=  \{ (\bar{\omega}, \bar{V}, \bar{\eta}) | \bar{\eta} = k 1_n \},
\end{align*}
for any $k\in \mathbb{R}$. Clearly the largest invariant set in $G$ is the origin. Thus, by LaSalle's theorem for global stability, the origin  of  \eqref{eq:dynamics_A_distributed_shifted_2} is globally asymptotically stable.
\end{proof}

%%%%%%%%%%%%%%%%%%%%%%%%%%%%%%%%%%%%%%%%
\subsection{Equilibrium analysis}
\label{sec:secondary_frequency_control_equilibrium}
In this section we study the properties of the equilibria of \eqref{eq:cl_dynamics_vec_delta_int_projected} and \eqref{eq:cl_dynamics_vec_delta_int_distributed}, respectively. We show that by employing the aforementioned secondary frequency control schemes, it is possible to tighten the error bounds in Theorem~\ref{th:equilibrium}. Analogous to Section~\ref{sec:dec_control}, we assume uniform controller gains.
%\begin{assumption}
%\label{ass:scalar_2} The controller gains satisfy
%$K^\omega_i=k^\omega, K^\text{droop}_i=k^\text{droop}, K^V_i=k^V, K^\text{droop, I}_i = k^\text{droop, I} \; \forall i=1, \dots, n$.
%\end{assumption}

\begin{theorem}
\label{th:equilibrium_secondary}
Assume that Assumptions  \ref{ass:balances_power} and \ref{ass:scalar_1} hold, mutatis mutandis. Consider the HVDC and AC systems \eqref{eq:voltage}, \eqref{eq:frequency} with generation control \eqref{eq:droop_control} and where the relation \eqref{eq:power-current} holds. Objective \ref{obj:1} is satisfied for the secondary controller \eqref{eq:droop_control_secondary} when $\gamma \rightarrow 0^+$, and for the secondary controller \eqref{eq:droop_control_secondary_distributed} when $\delta \rightarrow + \infty$. In both cases, Objective \ref{obj:1} is satisfied for $e^{\text{dist}} = e^{\text{gen}}_\text{dist}$, $e^V=e^V_\text{dec}$ and  $e^\omega=e^\omega_\text{dist}$, where
\begin{align*}
e^{\text{gen}}_\text{dist} &=\frac{k^\text{droop}\max_i P^m_i}{k^\text{droop}+k^\omega} \left( (n-1) + \frac{k^V}{V^\text{nom}} \sum_{i=2}^n \frac{1}{\lambda_i(\mathcal{L}_R)} \right) \\
e^V_\text{dist} &=   \frac{k^\omega \max_i \left| P^m_i  \right| }{(k^\text{droop}+k^\omega)V^\text{nom}} \sum_{i=2}^n \frac{1}{\lambda_i(\mathcal{L}_R)} \\
e^\omega_\text{dist} &= \frac{\max_i P^m_i}{k^\text{droop}+k^\omega} \left( (n-1) + \frac{k^V}{V^\text{nom}} \sum_{i=2}^n \frac{1}{\lambda_i(\mathcal{L}_R)} \right) .
\end{align*}
Furthermore $1_n^T\hat{\omega}  = 1_n^T\hat{V} = 0$, i.e., the average frequency and voltage deviations are zero.
\end{theorem}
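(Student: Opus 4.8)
The plan is to compute the equilibria of \eqref{eq:cl_dynamics_vec_delta_int_projected} and \eqref{eq:cl_dynamics_vec_delta_int_distributed} in closed form and then pass to the limits $\gamma\to 0^+$ and $\delta\to+\infty$. Under Assumptions~\ref{ass:balances_power} and~\ref{ass:scalar_1} (also taking $K^{\text{droop,I}}=k^{\text{droop,I}}I_n$), setting the right-hand side of \eqref{eq:cl_dynamics_vec_delta_int_projected} to zero and premultiplying the first block-row by $M^{-1}$ and the second by $E^{-1}$ gives the equilibrium conditions
\begin{align*}
(k^\omega+k^\text{droop})\hat\omega &= k^V\hat V - \tfrac{k^V k^{\text{droop,I}}}{k^\omega}\,\eta'\,1_n + P^m, \\
\tfrac{k^\omega}{V^\text{nom}}\hat\omega &= \big(\mathcal L_R+\tfrac{k^V}{V^\text{nom}}I_n\big)\hat V, \\
\tfrac{k^{\text{droop,I}}}{n}\,1_n^T\hat\omega &= \gamma\eta'.
\end{align*}
The third condition gives $1_n^T\hat\omega=\tfrac{n\gamma}{k^{\text{droop,I}}}\eta'$, whereas for \eqref{eq:cl_dynamics_vec_delta_int_distributed} premultiplying its integral condition $K^{\text{droop,I}}\hat\omega=\delta\mathcal L_c\eta$ by $1_n^T$ and using $1_n^T\mathcal L_c=0$ gives $1_n^T\hat\omega=0$ for every $\delta>0$; in both cases premultiplying the voltage condition by $1_n^T$ and using $1_n^T\mathcal L_R=0$ yields $1_n^T\hat V=\tfrac{k^\omega}{k^V}1_n^T\hat\omega$. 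It then remains to show both quantities vanish in the respective limit, which is the last claim of the theorem.

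I would next eliminate the integral state. For \eqref{eq:cl_dynamics_vec_delta_int_distributed}, writing $\eta=\bar\eta\,1_n+\eta_\perp$ with $\eta_\perp\perp 1_n$ and using that the communication graph is connected, the integral condition forces $\eta_\perp=\delta^{-1}\mathcal L_c^{\dagger}K^{\text{droop,I}}\hat\omega\to 0$ as $\delta\to+\infty$, so $\eta\to\bar\eta\,1_n$; for \eqref{eq:cl_dynamics_vec_delta_int_projected} the scalar $\eta'$ already is the common component, and summing the first condition shows $\eta'\to\tfrac{k^\omega}{n k^V k^{\text{droop,I}}}1_n^TP^m$ and hence $1_n^T\hat\omega\to 0$. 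In both cases the equilibrium generation becomes $P^\text{gen}_i=-k^\text{droop}\hat\omega_i-\tfrac{k^V k^{\text{droop,I}}}{k^\omega}\eta^\star$ for a single scalar $\eta^\star$ (equal to $\eta'$ or $\bar\eta$), and summing the first condition while using $1_n^T\hat\omega=1_n^T\hat V=0$ gives $\tfrac{k^V k^{\text{droop,I}}}{k^\omega}\eta^\star=\tfrac1n 1_n^TP^m$. Substituting this back, the first two conditions become
\begin{align*}
(k^\omega+k^\text{droop})\hat\omega=k^V\hat V+\tilde P^m, \qquad \tfrac{k^\omega}{V^\text{nom}}\hat\omega=\big(\mathcal L_R+\tfrac{k^V}{V^\text{nom}}I_n\big)\hat V,
\end{align*}
where $\tilde P^m\triangleq P^m-\big(\tfrac1n 1_n^TP^m\big)1_n$ satisfies $1_n^T\tilde P^m=0$. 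These are precisely the equilibrium conditions of the decentralized system that underlie Theorem~\ref{th:equilibrium}, with the disturbance $P^m$ replaced by the zero-mean vector $\tilde P^m$.

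Consequently I would apply the bounds of Theorem~\ref{th:equilibrium} with the substitution $P^m\mapsto\tilde P^m$. Because $1_n^T\tilde P^m=0$, the common-mode terms $\tfrac1{nk^\text{droop}}|1_n^TP^m|$ in $e^\omega_\text{dec}$ and $\tfrac{k^\omega}{nk^\text{droop}k^V}|1_n^TP^m|$ in $e^V_\text{dec}$ vanish, so (bounding $\max_i\tilde P^m_i\le\max_iP^m_i$) the bounds reduce to $e^\omega_\text{dist}$ and $e^V_\text{dist}$, establishing the frequency and voltage parts of Objective~\ref{obj:1}. For the power-sharing part, combining $P^\text{gen}_i=-k^\text{droop}\hat\omega_i-\tfrac{k^V k^{\text{droop,I}}}{k^\omega}\eta^\star$ with $\tfrac{k^V k^{\text{droop,I}}}{k^\omega}\eta^\star=\tfrac1n 1_n^TP^m$ gives $P^\text{gen}_i+\tfrac1n\sum_jP^m_j=-k^\text{droop}\hat\omega_i$, so $\big|P^\text{gen}_i+\tfrac1n\sum_jP^m_j\big|=k^\text{droop}|\hat\omega_i|\le k^\text{droop}\,e^\omega_\text{dist}=e^{\text{gen}}_\text{dist}$, which completes the verification of Objective~\ref{obj:1}.

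The main obstacle is to make the two limiting arguments rigorous and, unless one simply invokes \cite{andreasson2014distributed}, to re-derive the explicit $\sum_{i=2}^n 1/\lambda_i(\mathcal L_R)$ bounds. One must verify that the equilibrium of \eqref{eq:cl_dynamics_vec_delta_int_projected} (resp.\ \eqref{eq:cl_dynamics_vec_delta_int_distributed}) indeed converges as $\gamma\to0^+$ (resp.\ $\delta\to+\infty$); for the latter this is cleanest by keeping the split $\eta=\bar\eta\,1_n+\eta_\perp$, observing that $\hat\omega$, $\hat V$ and $\bar\eta$ solve a linear system whose coefficients depend on $\delta$ only through the vanishing term $\delta^{-1}\mathcal L_c^{\dagger}K^{\text{droop,I}}$, and checking that the limiting system is nonsingular. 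Re-deriving the bounds themselves reduces to solving the reduced $2n$-dimensional system above — eliminate $\hat V=\big(\mathcal L_R+\tfrac{k^V}{V^\text{nom}}I_n\big)^{-1}\tfrac{k^\omega}{V^\text{nom}}\hat\omega$, decompose $\hat\omega$ along $1_n$ and $1_n^\perp$, and estimate the orthogonal component against $\tilde P^m$ through the pseudoinverse of $\mathcal L_R$, which is where the $\sum_{i=2}^n 1/\lambda_i(\mathcal L_R)$ terms enter — exactly as in the proof of Theorem~\ref{th:equilibrium}.
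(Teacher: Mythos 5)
Your overall route is sound and, in its second half, genuinely different from the paper's. The first part coincides: like the paper, you obtain $1_n^T\hat{\omega}=0$ and $1_n^T\hat{V}=0$ by averaging the equilibrium equations (for \eqref{eq:cl_dynamics_vec_delta_int_distributed} exactly, for \eqref{eq:cl_dynamics_vec_delta_int_projected} in the limit $\gamma\to 0^+$), and you show consensus of the integral states as $\delta\to\infty$ via the $1_n$/$1_n^\perp$ split rather than the paper's eigen-decomposition of $\mathcal{L}_c$ — equivalent arguments. Where you diverge is in deriving the bounds: the paper eliminates $\eta'$ to form $\gamma$-dependent matrices $A_1,A_2$ containing a $\tfrac{1}{\gamma}1_{n\times n}$ term and argues that the eigenvalue along $1_n$ blows up, whereas you substitute the limiting value $\tfrac{k^Vk^{\text{droop,I}}}{k^\omega}\eta^\star=\tfrac1n 1_n^TP^m$ directly and observe that the remaining $2n$ equations are exactly the decentralized equilibrium conditions of Theorem~\ref{th:equilibrium} with $P^m$ replaced by its zero-mean part $\tilde P^m$. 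This reduction is cleaner and makes the limit argument more transparent (the $1_n^\perp$-projected equations do not involve $\gamma$ or $\eta'$ at all, so only three scalar averaged equations depend on $\gamma$). Your power-sharing bound is also notably slicker: the identity $P^{\text{gen}}_i+\tfrac1n 1_n^TP^m=-k^{\text{droop}}\hat{\omega}_i$ gives $e^{\text{gen}}_{\text{dist}}=k^{\text{droop}}e^{\omega}_{\text{dist}}$ immediately, which matches the stated formulas and avoids the paper's separate eigen-expansion of $P^{\text{gen}}$. (Incidentally, your sign for $\eta'$ is the consistent one; the paper's intermediate equation carries a sign typo.)

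One step as written is wrong, though it is fixable: the inequality $\max_i|\tilde P^m_i|\le\max_i|P^m_i|$ does not hold in general — e.g. $P^m=(1,1,1,-1)^T$ gives $\tilde P^m=(\tfrac12,\tfrac12,\tfrac12,-\tfrac32)^T$. So you cannot apply Theorem~\ref{th:equilibrium} purely as a black box with input $\tilde P^m$ and then replace $\max_i|\tilde P^m_i|$ by $\max_i|P^m_i|$. The correct justification is the one you implicitly use in your closing paragraph: the surviving part of the solution lives in $1_n^\perp$, and for every eigenvector $v_i\perp 1_n$ of $\mathcal{L}_R$ one has $(v_i)^T\tilde P^m=(v_i)^TP^m$, so the second term of the decentralized bound — which is an estimate of $\norm{\sum_{i\ge 2}\lambda_i^{-1}(v_i^TP^m)v_i}_\infty$ — carries over verbatim with $\max_i|P^m_i|$ unchanged, while the common-mode term vanishes. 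With that substitution of justification, and with the nonsingularity/convergence checks you already flag for the $\gamma\to 0^+$ and $\delta\to\infty$ limits, the proof is complete.
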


\begin{proof}
Before studying the equilibria of \eqref{eq:cl_dynamics_vec_delta_int_projected} and \eqref{eq:cl_dynamics_vec_delta_int_distributed}, we will show that under the assumptions that $\gamma \rightarrow 0^+$, and $\delta \rightarrow + \infty$, the first $2n$ rows of the equilibria of \eqref{eq:cl_dynamics_vec_delta_int_projected} and \eqref{eq:cl_dynamics_vec_delta_int_distributed} are identical.
Consider the last row of the equilibrium of \eqref{eq:cl_dynamics_vec_delta_int_projected}, which as $\gamma\rightarrow 0$ implies $1_n^T\hat{\omega} = 0$. Thus, premultiplying the $(n+1)$th to $2n$th rows of the equilibrium of \eqref{eq:cl_dynamics_vec_delta_int_projected} with $1_n^TC$ yields $1_n^T\hat{V} = 0$. Finally, premultiplying the first $n$ rows of \eqref{eq:cl_dynamics_vec_delta_int_projected} with $1_n^TM^{-1}$ yields
\begin{align}
\frac{n k^V k^\text{droop, I}}{k^\omega} \eta' = - 1_n^TP^m.  \label{eq:cl_dynamics_vec_delta_int_projected_equilibrium_gamma}
\end{align}
Now consider the equilibrium of \eqref{eq:cl_dynamics_vec_delta_int_distributed}.
Following similar steps as in the manipulation of \eqref{eq:cl_dynamics_vec_delta_int_projected}, we obtain that $1_n^T\hat{\omega} = 0$ and $1_n^T\hat{V} = 0$. Thus premultiplying the first $n$ rows of the equilibrium of \eqref{eq:cl_dynamics_vec_delta_int_distributed} with $1_n^TM^{-1}$, we obtain
\begin{align}
\frac{k^V k^{\text{droop, I}}}{k^\omega} 1_n^T \eta &= - 1_n^T P^m.
\label{eq:cl_dynamics_vec_delta_int_distributed_eq_first_n}
\end{align}
We write $\eta$ as a linear combination
$\eta = \sum_{i=1}^n a^0_i v^0_i$,
where $v^0_i$ is the (normed) $i$th eigenvector of $\mathcal{L}_c$. Inserting the eigen decomposition of $\eta$ in \eqref{eq:cl_dynamics_vec_delta_int_distributed_eq_first_n} yields
\begin{align*}
a^0_i &= - \frac{1_n^T I^\text{inj} k^\omega \sqrt{n}}{k^V k^{\text{droop, I}}},
\end{align*}
since $v^0_1 = \frac{1}{\sqrt{n}} 1_n$. In order to determine $a_i^0$ for $i\ge 2$, we consider the last $n$ rows of the equilibrium of \eqref{eq:cl_dynamics_vec_delta_int_distributed}. Again, using the eigen decomposition of $\eta$ we obtain
\begin{align*}
k^{\text{droop, I}} \hat{\omega} &= \delta \mathcal{L}_c \sum_{i=1}^n a^0_i v^0_i =  \sum_{i=2}^n a^0_i \lambda^0_i v^0_i.
\end{align*}
Premultiplying the above equation with $(v^0_j)^T$ we obtain
\begin{align*}
a^0_j = \frac{k^{\text{droop, I}} (v^0_j)^T \hat{\omega}}{\delta \lambda_j^0}.
\end{align*}
Clearly $a^0_j \rightarrow 0$ as $\delta \rightarrow \infty$ for $j=2,\dots, n$, if $\hat{\omega}$ is bounded. But $\hat{\omega}$ must be bounded since the system matrix of \eqref{eq:cl_dynamics_vec_delta_int_distributed} is full rank, implying that the steady-state solution to \eqref{eq:cl_dynamics_vec_delta_int_distributed} is bounded. Thus, at steady-state we have $\eta_i = \eta^* \; \forall i=1, \dots, n$. Inserting this in \eqref{eq:cl_dynamics_vec_delta_int_distributed_eq_first_n} yields
\begin{align}
\frac{n k^V k^{\text{droop, I}}}{k^\omega}  \eta^* &= - 1_n^T P^m.
\label{eq:eq:cl_dynamics_vec_delta_int_distributed_eta_final}
\end{align}
Comparing \eqref{eq:eq:cl_dynamics_vec_delta_int_distributed_eta_final} with \eqref{eq:cl_dynamics_vec_delta_int_projected_equilibrium_gamma}, it is clear that the first $2n$ rows of the equilibria of \eqref{eq:cl_dynamics_vec_delta_int_projected} and \eqref{eq:cl_dynamics_vec_delta_int_distributed} are identical, and thus define the same solutions. We thus proceed only considering the equilibrium of \eqref{eq:cl_dynamics_vec_delta_int_projected}, whose last row implies
\begin{align}
\eta' &= \frac{k^\text{droop, I}}{n\gamma} 1_n^T \hat{\omega}.
\label{eq:eta'_in_omega}
\end{align}
Eliminating $\eta'$ in \eqref{eq:cl_dynamics_vec_delta_int_projected}, we obtain
\begin{align}
&{\begin{bmatrix}
{-}(k^\omega{+}k^{\text{droop}})I_n{-}\frac{k^V(k^\text{droop, I})^2}{n\gamma k^\omega}  1_{n \times n} & k^V I_n  \\
\frac{k^\omega}{V^{\text{nom}}} I_n & {-}(\mathcal{L}_R{+}\frac{k^V}{V^{\text{nom}}} I_n )
\end{bmatrix}}
\begin{bmatrix}
\hat{\omega} \\ \hat{V}
\end{bmatrix} \nonumber \\
 &=
\begin{bmatrix}
- P^m \\
0_{n}\\
0
\end{bmatrix}.
\label{eq:cl_dynamics_vec_delta_int_projected_eq}
\end{align}
Premultiplying the last $n$ rows of \eqref{eq:cl_dynamics_vec_delta_int_projected_eq} with $\frac{V^{\text{nom}}}{k^\omega} \left((k^\omega{+}k^{\text{droop}})I_n + \frac{k^V(k^\text{droop, I})^2}{n\gamma k^\omega}  1_{n \times n}\right)$ and adding to the first $n$ yields
\begin{align*}
&\Bigg(- k^V I_n +  \frac{V^{\text{nom}}}{k^\omega} \left((k^\omega{+}k^{\text{droop}})I_n + \frac{k^V(k^\text{droop, I})^2}{n\gamma k^\omega}  1_{n \times n}\right) \\ & \;\;\;\;\times \left(\mathcal{L}_R{+}\frac{k^V}{V^{\text{nom}}} I_n \right) \Bigg) \hat{V} = P^m,
\end{align*}
which after some simplification gives
\begin{align}
& \Bigg( \frac{k^V}{k^\omega} \left(k^{\text{droop}}I_n + \frac{k^V(k^\text{droop, I})^2}{n\gamma k^\omega}  1_{n \times n}\right) \nonumber  \\ & \;\;\;\; + \frac{V^\text{nom}}{k^\omega}\left( k^\omega + k^\text{droop} \right) \mathcal{L}_R \Bigg) \hat{V}  \triangleq A_1 \hat{V} = P^m. \label{eq:eigenvalues_secondary_1}
\end{align}
Write $\hat{V} = \sum_{i=1}^n a^1_i v^1_i$, where $v^1_i$ is an eigenvector of $A_1$ with the corresponding eigenvalue $\lambda^1_i$. It is easily verified that $\frac{1}{\sqrt{n}}1_n$ is an eigenvalue of $A_1$, which we denote $v^1_1$. Since $A_1$ is symmetric, its eigenvectors can be chosen to form an orthonormal basis. By premultiplying \eqref{eq:eigenvalues_secondary_1} with $(v^1_j)^T$, and keeping in mind that $A_1v^1_j = \lambda^1_j v^1_j$, we obtain
$ a_j = {(v^1_j)^TP^m}/{\lambda_j^1}$. By direct computation we obtain
\begin{align*}
\lambda^1_1 &= \frac{k^V}{k^\omega} \left((k^{\text{droop}}) + \frac{k^V(k^\text{droop, I})^2}{\gamma k^\omega}  \right),
\end{align*}
by which we conclude that $\lambda_1^1 \rightarrow \infty$ as $\gamma \rightarrow 0^+$. Thus $a_1 \rightarrow 0$ as $\gamma \rightarrow 0^+$. For $i\ge 2$ we obtain after some calculations
$
\lambda_i^1 \ge \frac{V^\text{nom}}{k^\omega}\left( k^\omega + k^\text{droop} \right) \lambda_i(\mathcal{L}_R)% \ge V^\text{nom} \lambda_i(\mathcal{L}_R)
$.
 Thus, we obtain the following bound on $\hat{V}$:
\begin{align*}
\lim_{\gamma \rightarrow 0}  \norm{\hat{V}}_\infty &= \norm{\sum_{i=2}^n \frac{(v^1_i)^TP^m}{\lambda_i} v^1_i }_\infty \\
&\le  \frac{k^\omega \max_i \left| P^m_i  \right| }{(k^\omega + k^\text{droop})V^\text{nom}} \sum_{i=2}^n \frac{1}{\lambda^1_i(\mathcal{L}_R)}.
\end{align*}
Premultiplying the first $n$ rows of \eqref{eq:cl_dynamics_vec_delta_int_projected_eq} with $\frac{1}{k^V}(\mathcal{L}_R{+}\frac{k^V}{V^{\text{nom}}} I_n ) $, and adding to the last $n$ rows of \eqref{eq:cl_dynamics_vec_delta_int_projected_eq} yields
\begin{align*}
&\;\;\;\;\Bigg(- \frac{k^\omega}{V^{\text{nom}}} I_n + \frac{1}{k^V} \left(\mathcal{L}_R + \frac{k^V}{V^{\text{nom}}} I_n \right) \\
& \;\;\;\; \times \left( (k^\omega{+}k^{\text{droop}})I_n + \frac{k^V(k^\text{droop, I})^2}{n\gamma k^\omega}  1_{n \times n} \right) \Bigg) \hat{\omega} \\
&=  \frac{1}{k^V} \left(\mathcal{L}_R + \frac{k^V}{V^{\text{nom}}} I_n \right) P^m.
\end{align*}
After some algebra, the following expression is obtained
\begin{align}
& \;\;\;\; \Bigg( \frac{k^\text{droop}}{V^\text{nom}} I_n +\frac{k^V (k^\text{droop})^2}{V^\text{nom} k^\omega n\gamma} 1_{n\times n} + \frac{k^\text{droop}+k^\omega}{k^V} \mathcal{L}_R \Bigg)\hat{\omega} \nonumber \\
&\triangleq A_2 \hat{\omega} = \frac{1}{k^V} \left(\mathcal{L}_R + \frac{k^V}{V^{\text{nom}}} I_n \right) P^m. \label{eq:stationarity_omega}
\end{align}
Again, write $\hat{\omega} = \sum_{i=1}^n a^2_i v^2_i$,
where $v^2_i$ is the eigenvector of $A_2$ with eigenvalue $\lambda^2_i$.
Let $\lambda^2_1$ denote the eigenvalue of $A_2$ with eigenvector $\frac{1}{\sqrt{n}}1_n$. By direct computation
\begin{align*}
\lambda^2_1 &= \frac{k^\text{droop}}{V^\text{nom}}  +\frac{k^V (k^\text{droop,I})^2}{V^\text{nom} k^\omega \gamma},
\end{align*}
and clearly $\lambda^2_1 \rightarrow \infty$ as $\gamma \rightarrow 0^+$, implying that $a^2_1 \rightarrow 0$ as $\gamma \rightarrow 0^+$. For the remaining eigenvalues, i.e., $i\ge 2$, we obtain after some calculations
\begin{align*}
\lambda^2_i \ge \frac{k^\text{droop}+k^\omega}{k^V} \lambda_i(\mathcal{L}_R).
\end{align*}
By premultiplying \eqref{eq:stationarity_omega} with $(a^2_j)^T$, we obtain
\begin{align*}
a^2_j &= \frac{ (v^2_j)^T \left( \mathcal{L}_R + \frac{k^V}{V^{\text{nom}}} I_n \right) P^m}{k^V \lambda^2_j}.
\end{align*}
Thus we obtain the following bound on $\hat{\omega}$
\begin{align*}
\lim_{\gamma \rightarrow 0}  \norm{\hat{\omega}}_\infty &=   \norm{\sum_{i=2}^n \frac{ (v^2_i)^T \left(\mathcal{L}_R + \frac{k^V}{V^{\text{nom}}} I_n \right) P^m}{k^V\lambda^2_i} v^2_i }_\infty \\
&\le \sum_{i=2}^n \norm{ \frac{ \left(\lambda_i(\mathcal{L}_R) + \frac{k^V}{V^{\text{nom}}} \right) (v^2_i)^T P^m}{(k^\text{droop}+k^\omega) \lambda_i(\mathcal{L}_R) } v^2_i }_\infty \\
&\le \frac{\max_i P^m_i}{k^\text{droop}+k^\omega} \left( (n-1) + \frac{k^V}{V^\text{nom}} \sum_{i=2}^n \frac{1}{\lambda_i(\mathcal{L}_R)} \right)
,
\end{align*}
where we have used the fact that the eigenvectors of $A_2$ are also eigenvectors of $\mathcal{L}_R$.  Finally, we consider the power output of the generation control. Note that $\lim_{\gamma\rightarrow 0} \lambda^2_1=\frac{k^V (k^\text{droop,I})^2}{V^\text{nom} k^\omega \gamma}$. Thus, when $\gamma \rightarrow 0$, we have by \eqref{eq:eta'_in_omega} that
\begin{align*}
\lim_{\gamma \rightarrow 0} P^\text{gen} &= - k^\text{droop} \hat{\omega} - \frac{k^V k^\text{droop,I}}{n k^\omega} 1_n \eta ' \\
&= -\left(  k^\text{droop} I_n + \frac{k^V (k^\text{droop,I})^2}{n^2 \gamma k^\omega} 1_{n\times n}  \right) \hat{\omega} \\
&= -\left(  k^\text{droop} I_n + \frac{k^V (k^\text{droop,I})^2}{n^2 \gamma k^\omega} 1_{n\times n}  \right) \sum_{i=1}^n a^2_i v^2_i \\
&= -\left(  k^\text{droop} I_n + \frac{k^V (k^\text{droop,I})^2}{n^2 \gamma k^\omega} 1_{n\times n}  \right) \\
& \;\;\;\; \times \Bigg( \frac{ \gamma  k^\omega  1_n^T  P^m}{n k^V (k^\text{droop,I})^2 } 1_n \\
&\;\;\;\; + \sum_{i=2}^n \frac{ (v^2_i)^T \left(\mathcal{L}_R + \frac{k^V}{V^{\text{nom}}} I_n \right) P^m}{k^V\lambda^2_i} v^2_i \Bigg).
\end{align*}
Noting that $v_1^2=\frac{1}{\sqrt{n}}$, we have that $1_n^Tv_i^2=0$ for $i \ge 2$. By letting $\gamma \rightarrow 0$, the above equation simplifies to
\begin{align*}
\lim_{\gamma \rightarrow 0}  P^\text{gen} &=  - \frac{1}{n} 1_n^T P^m 1_n \\
& \;\;\;\; - \sum_{i=2}^n \frac{ k^\text{droop} (v^2_i)^T \left(\mathcal{L}_R + \frac{k^V}{V^{\text{nom}}} I_n \right) P^m}{k^V\lambda^2_i} v^2_i.
\end{align*}
Using the previously derived lower bound on $\lambda_i^2$ for $i\ge 2$, we obtain the following bound on the generated power
\begin{align*}
&\;\;\;\; \norm{\frac{1}{n} 1_n^T P^m 1_n + P^\text{gen} }_\infty \\
&= \norm{\sum_{i=2}^n \frac{ k^\text{droop} (v^2_i)^T \left(\mathcal{L}_R + \frac{k^V}{V^{\text{nom}}} I_n \right) P^m}{k^V\lambda^2_i} v^2_i}_\infty \\
&\le \sum_{i=2}^n \norm{ \frac{ k^\text{droop}  k^V \left( \lambda_i(\mathcal{L}_R) + \frac{k^V}{V^{\text{nom}}}  \right) (v^2_i)^T P^m}{k^V (k^\text{droop} + k^\omega) \lambda_i(\mathcal{L}_R)} v^2_i}_\infty \\
&\le \frac{k^\text{droop}\max_i P^m_i}{k^\text{droop}+k^\omega} \left( (n-1) + \frac{k^V}{V^\text{nom}} \sum_{i=2}^n \frac{1}{\lambda_i(\mathcal{L}_R)} \right) \qedhere
\end{align*}
\end{proof}
\begin{remark}
The upper bounds on the AC frequency and the DC voltage errors, i.e., $e^\omega$ and $e^V$ in Theorem \ref{th:equilibrium_secondary} are lower than the corresponding bounds in Theorem \ref{th:equilibrium} given the same network and controller parameters. In particular
\begin{align*}
e^V_\text{dec} - e^V_\text{dist} &= \frac{k^\omega}{nk^\text{droop}k^V}  \left| \sum_{i=1}^n P^m_i \right| \\
e^\omega_\text{dec} - e^\omega_\text{dist} &= \frac{1}{nk^\text{droop}}  \left| \sum_{i=1}^n P^m_i \right|.
\end{align*}
 However, as the bounds are conservative, no conclusion about the actual control errors can be drawn.
\end{remark}

%%%%%%%%%%%%%%%%%%%%%%%%%%%%%%%%%%%%%%%%
%%%%%%%%%%%%%%%%%%%%%%%%%%%%%%%%%%%%%%%%
\section{Simulations}
\label{sec:simulations}
In this section, simulations are conducted on a test system to validate the performance of the proposed controllers. The simulation was performed in Matlab, using a dynamic phasor approach based on \cite{demiray2008} The test system is illustrated in Figure~\ref{fig:testsystem}.
\begin{figure}[htb]
  \centering
  \def\svgwidth{\columnwidth}
  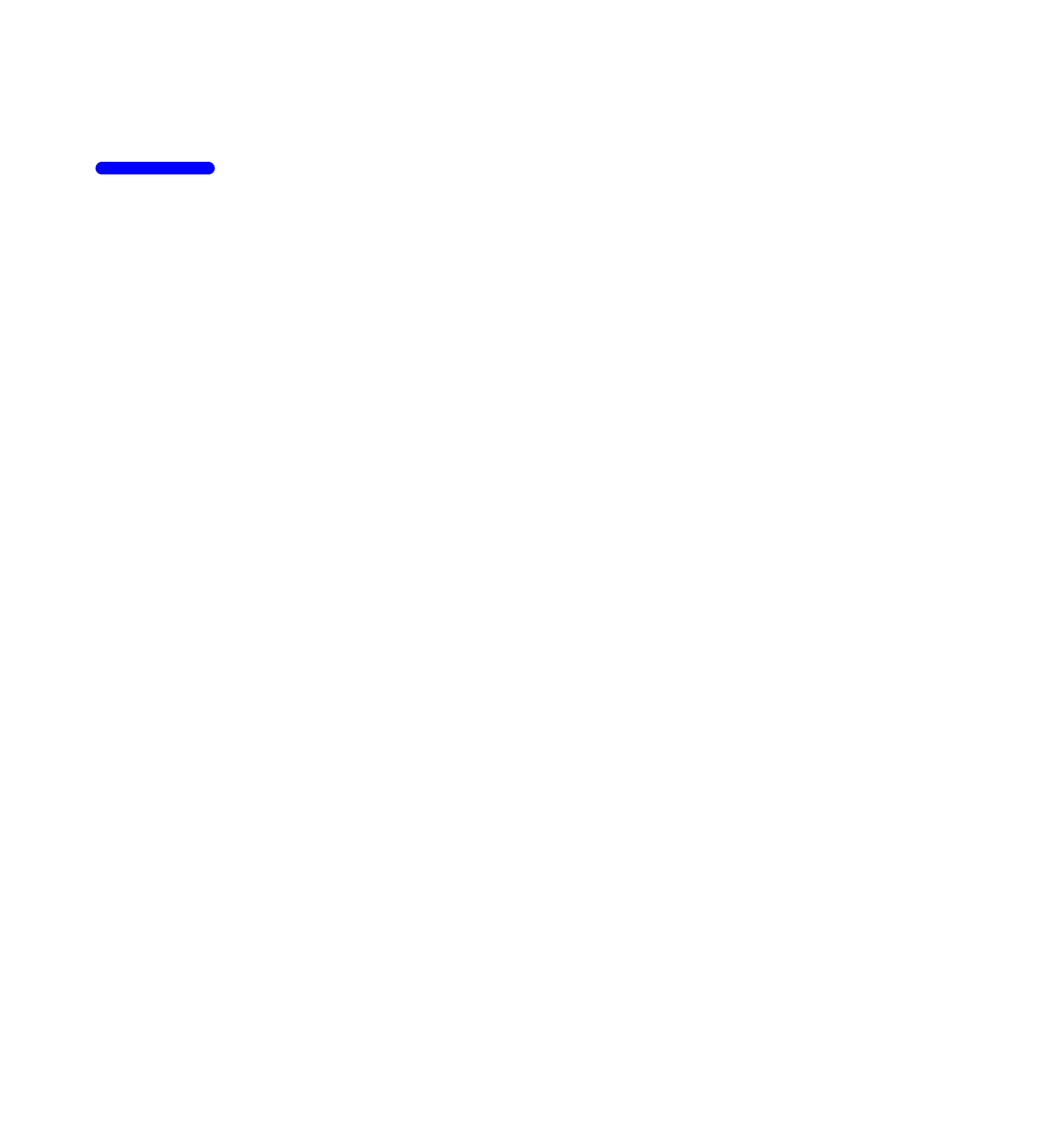
  \caption{MTDC test system, consisting of a 6-terminal MTDC grid. Each terminal is connected to an IEEE 14 bus AC grid, sketched as octagons. The dashed lines illustrate the topology of the communication grid of the controller~\eqref{eq:droop_control_secondary_distributed}.}
  \label{fig:testsystem}
\end{figure}
The parameters of the MTDC grid are given in \ref{tab:HVDCgridParameter}, and  are chosen uniformly for all VSC stations. Note that we in the simulation also consider the inductances $L_{ij}$ and capacitances $C_{ij}$ of the HVDC lines. The capacitances of the terminals are assumed to be given by $C_i=0.375\times 10^{-3}$ p.u. The AC grid parameters were obtained from \cite{Milano}. The generators are modeled as a $6^{th}$ order machine model controlled by an automatic voltage controller and a governor \cite{kundur1994power}. The loads in the grid are assumed to be equipped with an ideal power controller.
\begin{table}[htb]
\centering
%\caption{HVDC grid line parameters, base power 100 MW, base voltage 320 kV}
\caption{HVDC grid line parameters}
\label{tab:HVDCgridParameter}
\begin{tabular}{ccccc} \toprule
$(i,j)$&$R_{ij}$ [p.u.]&$L_{ij}$ [$10^{-3}$ p.u.]& $C_{ij}$ [p.u.]\\ \midrule
(1,2), (1,3), (2,4), (3,4) & 0.0586 & 0.2560 &0.0085 \\
(2,3) & 0.0878 & 0.3840 &0.0127\\
(2,5), (4,5)  & 0.0732 & 0.3200 &0.0106\\
(2,6), (3,5), (5,6) & 0.1464 & 0.6400 &0.0212\\
  \bottomrule
\end{tabular}
\end{table}
\begin{table}[htb]
\centering
\raa{1.3}
\caption{Controller Parameter}
\label{tab:ControllerParameter}
\begin{tabular}{@{}cccccc@{}}\toprule
$K^{\omega}_i$ &$K^{V}_i$&$K^\text{droop}_i$& $K^\text{droop, I}_i$& $\gamma$  &$\delta$  \\
 \midrule
9000            &   110       & 8   & 10         &  0 & 5\\
\bottomrule
\end{tabular}
\end{table}
The controllers \eqref{eq:droop_control}, \eqref{eq:droop_control_secondary} and \eqref{eq:droop_control_secondary_distributed} were applied to the aforementioned test grid. At time $t=1$ the output of one generator in area 1 was reduced by $0.2$ p.u., simulating a fault. The communication network of controller \eqref{eq:droop_control_secondary_distributed} is illustrated by the dashed lines in Figure~\ref{fig:testsystem}.
Figure~\ref{fig:Frequ} shows the frequency response of all AC grids for the three controllers considered. Figure~\ref{fig:ResVoltage} shows the DC voltages of the terminals. Figure~\ref{fig:Generators} shows the total change in the generated power within each AC area.
It can be noted that immediately after the fault, the frequency at the corresponding AC area drops. The frequency drop is followed by a voltage drop in all terminals, and a frequency drop at all AC areas. The frequencies and voltages converge to new stationary values after approximately $30$ s. We note that the asymptotic error of the frequencies and voltages are significantly smaller when the controllers \eqref{eq:droop_control_secondary} and \eqref{eq:droop_control_secondary_distributed} are employed, than when the decentralized droop controller \eqref{eq:droop_control} is employed. The generated power is shared fairly between the AC areas.

\begin{figure}[htb]
\centering
\begin{tikzpicture}
\begin{axis}
[xlabel={$t$ [s]},
ylabel={$\omega(t)$ [p.u.]},
xmin=0,
xmax=35,
xtick={0,5,...,35},
ymin=0.9991,
ymax=1.0002,
yticklabel style={/pgf/number format/.cd,
							fixed,
%							fixed zerofill,
                  					precision=4},
grid=major,
height=6.2cm,
width=0.95\columnwidth,
%x={(4.5cm,0)},
% y={(0,9cm)}, z={(0,0)},  %axis scale
legend cell align=left,
legend pos= south east,
legend entries={Controller (3),
			    Controller (7),
			    Controller (8)},
]
\addlegendimage{no markers,black}
\addlegendimage{no markers,black,dashed}
\addlegendimage{no markers,black, dotted}

\addplot[line width=0.8pt,color=blue] table[x index=0,y index=1,col sep=tab] {Freq_NoSec.txt};
\addplot[line width=0.8pt,color=red] table[x index=0,y index=6,col sep=tab] {Freq_NoSec.txt};
\addplot[line width=0.8pt,color=green!40!black] table[x index=0,y index=11,col sep=tab] {Freq_NoSec.txt};
\addplot[line width=0.8pt,color=cyan] table[x index=0,y index=16,col sep=tab] {Freq_NoSec.txt};
\addplot[line width=0.8pt,color=magenta] table[x index=0,y index=21,col sep=tab] {Freq_NoSec.txt};
\addplot[line width=0.8pt,color=yellow!90!black] table[x index=0,y index=26,col sep=tab] {Freq_NoSec.txt};
\addplot[line width=0.8pt,color=blue,style=dashed] table[x index=0,y index=1,col sep=tab] {Freq_SecCen.txt};
\addplot[line width=0.8pt,color=red,style=dashed] table[x index=0,y index=6,col sep=tab] {Freq_SecCen.txt};
\addplot[line width=0.8pt,color=green!40!black,style=dashed] table[x index=0,y index=11,col sep=tab] {Freq_SecCen.txt};
\addplot[line width=0.8pt,color=cyan,style=dashed] table[x index=0,y index=16,col sep=tab] {Freq_SecCen.txt};
\addplot[line width=0.8pt,color=magenta,style=dashed] table[x index=0,y index=21,col sep=tab] {Freq_SecCen.txt};
\addplot[line width=0.8pt,color=yellow!90!black,style=dashed] table[x index=0,y index=26,col sep=tab] {Freq_SecCen.txt};
\addplot[line width=0.8pt,color=blue,style=dotted] table[x index=0,y index=1,col sep=tab] {Freq_SecDis.txt};
\addplot[line width=0.8pt,color=red,style=dotted] table[x index=0,y index=6,col sep=tab] {Freq_SecDis.txt};
\addplot[line width=0.8pt,color=green!40!black,style=dotted] table[x index=0,y index=11,col sep=tab] {Freq_SecDis.txt};
\addplot[line width=0.8pt,color=cyan,style=dotted] table[x index=0,y index=16,col sep=tab] {Freq_SecDis.txt};
\addplot[line width=0.8pt,color=magenta,style=dotted] table[x index=0,y index=21,col sep=tab] {Freq_SecDis.txt};
\addplot[line width=0.8pt,color=yellow!90!black,style=dotted] table[x index=0,y index=26,col sep=tab] {Freq_SecDis.txt};
\end{axis}
\end{tikzpicture}
\caption{Average frequencies in the AC areas for the controllers \eqref{eq:droop_control}, \eqref{eq:droop_control_secondary} and \eqref{eq:droop_control_secondary_distributed}, respectively.}
\label{fig:Frequ}
\end{figure}
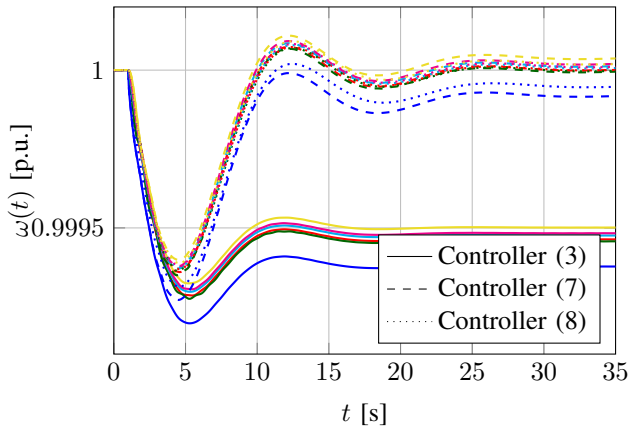

\begin{figure}[htb]
\centering
\begin{tikzpicture}
\begin{axis}[xlabel={$t$ [s]},
ylabel={$V(t)$ [p.u.]},
xmin=0,
xmax=35,
xtick={0,5,...,35},
ymin=0.93,
ymax=1.015,
yticklabel style={/pgf/number format/.cd,
                                     fixed,
%							fixed zerofill,
                  					precision=2},
grid=major,
height=6.2cm,
width=0.975\columnwidth,
%x={(4.5cm,0)},
% y={(0,9cm)}, z={(0,0)},  %axis scale
legend pos= south east,
legend cell align=left,
legend entries={Controller (3),
			    Controller (7),
			    Controller (8)},
]
\addlegendimage{no markers,black}
\addlegendimage{no markers,black,dashed}
\addlegendimage{no markers,black, dotted}
% Graph column 2 versus column 0
\addplot[line width=.8pt,color=blue] table[x index=0,y index=1,col sep=tab] {UDC_NoSec.txt};
\addplot[line width=.8pt,color=red] table[x index=0,y index=2,col sep=tab] {UDC_NoSec.txt};
\addplot[line width=.8pt,color=green!40!black] table[x index=0,y index=3,col sep=tab] {UDC_NoSec.txt};
\addplot[line width=.8pt,color=cyan] table[x index=0,y index=4,col sep=tab] {UDC_NoSec.txt};
\addplot[line width=.8pt,color=magenta] table[x index=0,y index=5,col sep=tab] {UDC_NoSec.txt};
\addplot[line width=.8pt,color=yellow!90!black] table[x index=0,y index=6,col sep=tab] {UDC_NoSec.txt};
\addplot[line width=.8pt,color=blue,style=dashed] table[x index=0,y index=1,col sep=tab] {UDC_SecCen.txt};
\addplot[line width=.8pt,color=red,style=dashed] table[x index=0,y index=2,col sep=tab] {UDC_SecCen.txt};
\addplot[line width=.8pt,color=green!40!black,style=dashed] table[x index=0,y index=3,col sep=tab] {UDC_SecCen.txt};
\addplot[line width=.8pt,color=cyan,style=dashed] table[x index=0,y index=4,col sep=tab] {UDC_SecCen.txt};
\addplot[line width=.8pt,color=magenta,style=dashed] table[x index=0,y index=5,col sep=tab] {UDC_SecCen.txt};
\addplot[line width=.8pt,color=yellow!90!black,style=dashed] table[x index=0,y index=6,col sep=tab] {UDC_SecCen.txt};
\addplot[line width=.8pt,color=blue,style=dotted] table[x index=0,y index=1,col sep=tab] {UDC_SecDis.txt};
\addplot[line width=.8pt,color=red,style=dotted] table[x index=0,y index=2,col sep=tab] {UDC_SecDis.txt};
\addplot[line width=.8pt,color=green!40!black,style=dotted] table[x index=0,y index=3,col sep=tab] {UDC_SecDis.txt};
\addplot[line width=.8pt,color=cyan,style=dotted] table[x index=0,y index=4,col sep=tab] {UDC_SecDis.txt};
\addplot[line width=.8pt,color=magenta,style=dotted] table[x index=0,y index=5,col sep=tab] {UDC_SecDis.txt};
\addplot[line width=.8pt,color=yellow!90!black,style=dotted] table[x index=0,y index=6,col sep=tab] {UDC_SecDis.txt};
\end{axis}
\end{tikzpicture}
\caption{DC terminal voltages for the controllers \eqref{eq:droop_control}, \eqref{eq:droop_control_secondary} and \eqref{eq:droop_control_secondary_distributed}, respectively.}
\label{fig:ResVoltage}
\end{figure}
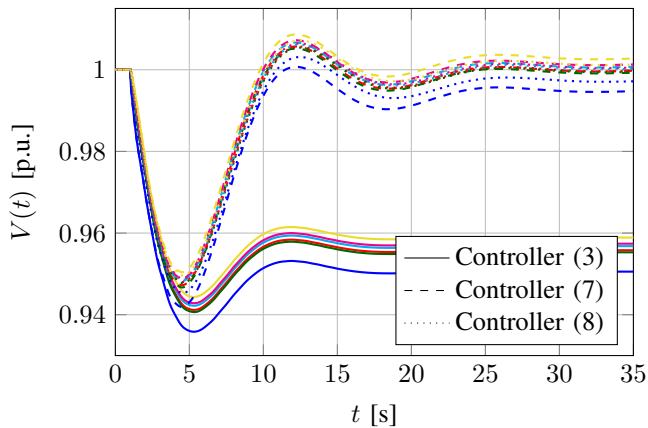

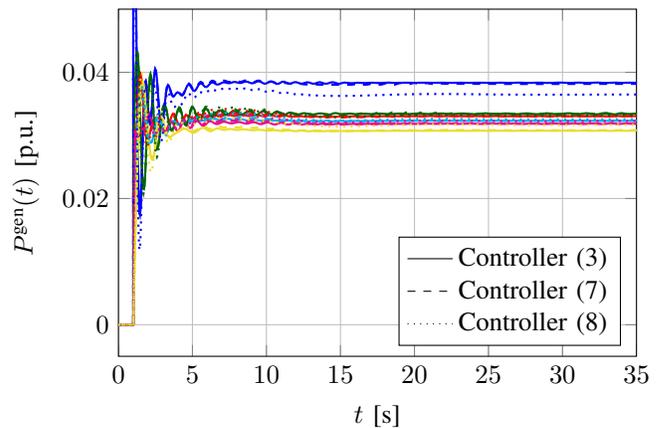
\begin{figure}[t!]
\centering
\begin{tikzpicture}
\begin{axis}
[xlabel={$t$ [s]},
ylabel={$P^\text{gen}(t)$ [p.u.]},
cycle list name=color list,
xmin=0,
xmax=35,
xtick={0,5,...,35},
ymin=-0.005,
ymax=0.05,
scaled ticks=false,
tick label style={/pgf/number format/fixed,
                  /pgf/number format/precision=2},
grid=major,
height=6.2cm,
width=0.975\columnwidth,
%x={(4.5cm,0)},
% y={(0,9cm)}, z={(0,0)},  %axis scale
legend pos= south east,
legend cell align=left,
legend entries={Controller (3),
			    Controller (7),
			    Controller (8)},
]
\addlegendimage{no markers,black}
\addlegendimage{no markers,black,dashed}
\addlegendimage{no markers,black, dotted}
% Graph column 2 versus column 0
\addplot[line width=0.8pt,color=blue] table[x index=0,y index=1,col sep=tab] {PGen_NoSec.txt};
\addplot[line width=0.8pt,color=red] table[x index=0,y index=2,col sep=tab] {PGen_NoSec.txt};
\addplot[line width=0.8pt,color=green!40!black] table[x index=0,y index=3,col sep=tab] {PGen_NoSec.txt};
\addplot[line width=0.8pt,color=cyan] table[x index=0,y index=4,col sep=tab] {PGen_NoSec.txt};
\addplot[line width=0.8pt,color=magenta] table[x index=0,y index=5,col sep=tab] {PGen_NoSec.txt};
\addplot[line width=0.8pt,color=yellow!90!black] table[x index=0,y index=6,col sep=tab] {PGen_NoSec.txt};
\addplot[line width=0.8pt,color=blue,style=dashed] table[x index=0,y index=1,col sep=tab] {PGen_SecCen.txt};
\addplot[line width=0.8pt,color=red,style=dashed] table[x index=0,y index=2,col sep=tab] {PGen_SecCen.txt};
\addplot[line width=0.8pt,color=green!40!black,style=dashed] table[x index=0,y index=3,col sep=tab] {PGen_SecCen.txt};
\addplot[line width=0.8pt,color=cyan,style=dashed] table[x index=0,y index=4,col sep=tab] {PGen_SecCen.txt};
\addplot[line width=0.8pt,color=magenta,style=dashed] table[x index=0,y index=5,col sep=tab] {PGen_SecCen.txt};
\addplot[line width=0.8pt,color=yellow!90!black,style=dashed] table[x index=0,y index=6,col sep=tab] {PGen_SecCen.txt};
\addplot[line width=0.8pt,color=blue,style=dotted] table[x index=0,y index=1,col sep=tab] {PGen_SecDis.txt};
\addplot[line width=0.8pt,color=red,style=dotted] table[x index=0,y index=2,col sep=tab] {PGen_SecDis.txt};
\addplot[line width=0.8pt,color=green!40!black,style=dotted] table[x index=0,y index=3,col sep=tab] {PGen_SecDis.txt};
\addplot[line width=0.8pt,color=cyan,style=dotted] table[x index=0,y index=4,col sep=tab] {PGen_SecDis.txt};
\addplot[line width=0.8pt,color=magenta,style=dotted] table[x index=0,y index=5,col sep=tab] {PGen_SecDis.txt};
\addplot[line width=0.8pt,color=yellow!90!black,style=dotted] table[x index=0,y index=6,col sep=tab] {PGen_SecDis.txt};
\end{axis}
\end{tikzpicture}
\caption{Total generated power in the AC areas for the controllers \eqref{eq:droop_control}, \eqref{eq:droop_control_secondary} and \eqref{eq:droop_control_secondary_distributed}, respectively.}\label{fig:Generators}
\end{figure}

%%%%%%%%%%%%%%%%%%%%%%%%%%%%%%%%%%%%%%%%
%%%%%%%%%%%%%%%%%%%%%%%%%%%%%%%%%%%%%%%%
\section{Discussion and Conclusions}
\label{sec:discussion}
In this paper we have studied controllers for sharing primary and secondary frequency control reserves in asynchronous AC systems connected through an MTDC system. We have reviewed a decentralized droop controller, and later expanded this to two distributed secondary frequency controllers. The distributed controllers use both local and neighboring frequency measurements of the AC grids, as well as the local DC voltage measurements. The resulting equilibria of the closed-loop system was shown to be globally asymptotically stable by using Lyapunov arguments. We also showed bounds for the asymptotic deviations of the DC voltages and the AC frequencies from their reference values. The obtained bounds are lower than the corresponding bounds when using only decentralized droop control. Furthermore the generated power from the primary frequency control is approximately shared fairly between the AC areas, and the error from fair power sharing is bounded. We have furthermore demonstrated our results on a 6 terminal MTDC system with connected AC systems. Future work will focus on eliminating the static errors in the frequencies.

\bibliography{references}
\bibliographystyle{plain}
\end{document}